\documentclass[11pt,reqno]{amsart}
\usepackage{amssymb}
\usepackage{palatino}
\usepackage{tikz-cd}
\input amssym.def

\usepackage{xcolor}
\usepackage{amsmath}
\usepackage{amssymb, amsfonts}
\usepackage{amscd}
\usepackage[mathscr]{eucal}
\usepackage{palatino}
\usepackage{theoremref}
\usepackage{tikz}
\usepackage{float}
\usepackage{eqnarray}
\usepackage{comment}
\usepackage[utf8]{inputenc}
\usepackage[T1]{fontenc}

\newfont{\cyrr}{wncyr10}

\newcommand{\thmref}[1]{Theorem~\ref{#1}}

\newcommand{\lemref}[1]{Lemma~\ref{#1}}

\newtheorem{thm}{Theorem}

\newtheorem{lem}[thm]{Lemma}

\newtheorem{rmk}{Remark}[section]

\newcommand{\Z}{{\mathbb Z}}

\newcommand{\G}{{\rm G}}

\def\({\left(}
\def\){\right)}
\def\[{\left[}
\def\]{\right]}

\def\G{{\rm G}}

\def\N{\mathbb{N}}
\def\R{\mathbb{R}}

\def\Q{\mathbb{Q}}

\def\F{\mathbb{F}}

\renewcommand{\mod}{ \text{mod }}

\newcommand\norm[1]{\left\lVert#1\right\rVert}

\title{A Non-abelian Large Sieve and Artin's primitive root conjecture}

\author{M. Ram Murty}
\address{ M. Ram Murty, \newline
Department of Mathematics,
Queen's University, Jeffery Hall, 
99 University Avenue, 
Kingston, ON K7L 3N6, 
Canada}
\email{murty@queensu.ca}

\author{Sunil Naik}
\address{Sunil Naik,
\newline
Max-Planck-Institut f\"{u}r Mathematik
\newline Vivatsgasse 7, 53111, Bonn, Germany
}
\email{naik@mpim-bonn.mpg.de}

\begin{document}
	
\hfuzz 5pt	
	
\subjclass[2020]{11A07, 11N05, 11N13, 11N36, 11N56, 11R32, 11R45, 11T23}
	
\keywords{Artin's primitive root conjecture, Non-abelian large sieve inequality, Duality principle}
	
\maketitle

%\begin{center}
%    \today
%\end{center}

\begin{abstract}
A well-known conjecture of Artin states that if $a$ is an integer not equal to $0, \pm 1$ or a perfect square, then there exist infinitely many primes $p$ such that $a$ is a primitive root $(\mod p)$. 
In this article, we study a generalization of the classical (abelian) large sieve inequality in non-abelian settings. Assuming the non-abelian large sieve inequality, we provide a proof of Artin's primitive root conjecture. 
Further, using duality techniques, we derive unconditional results towards the conjectured
non-abelian large sieve inequality.
\end{abstract}

\section{Introduction and statements of results}
Gauss in his {\em Disquisitiones Arithmeticae} observed that for any prime $p$ not equal to $2$ and $5$, the period of the decimal expansion of $\frac{1}{p}$ is equal to the 
smallest positive integer $k$ such that
$$
10^k ~\equiv~ 1 ~(\mod p).
$$
This is called the order of $10 ~(\mod p)$. Thus, by Lagrange's theorem, 
the period is maximal if and only if $\F_p^{^*} = \langle 10 \rangle$, in which case, $10$ is called a primitive root $(\mod p)$. 
Gauss raised the question of how often $10$ is a primitive root $(\mod p)$ but made no specific conjecture. The nineteenth-century mathematicians knew that if $p$ is prime of the form $p = 4q+1$ with $q$ prime, 
then $2$ is a primitive root $(\mod p)$. It was not until $1927$ that
a precise conjecture was formulated. In a private conversation 
with H. Hasse in September $1927$, E. Artin \cite{Ar} formulated a precise conjecture 
which states that if $a$ is an integer not equal to $0, \pm 1$ or a perfect square, 
then there exist infinitely many primes $p$ such that $a$ is a primitive root $(\mod p)$. 
Further, there exists a constant $A(a)$ depending on $a$ such that
$$
N_a(x) ~=~ \#\{p \leq x ~:~ a \text{ is a primitive root } (\mod p)\} 
~\sim~ A(a) \frac{x}{\log x}, \phantom{m} \text{as } x \to \infty.
$$
This conjecture is still open, though significant progress has been made in this direction. 
In 1967, assuming the Generalized Riemann Hypothesis (GRH), C. Hooley \cite{Ho} 
proved both Artin's primitive root conjecture and an asymptotic formula for $N_a(x)$ and gave an explicit formula for $A(a)$. 
In 1984, R. Gupta and the first author \cite{GM} proved unconditionally that 
there exists a set of $13$ numbers such that Artin's primitive root conjecture 
is true for at least one of these $13$ numbers. 
The size of this set was reduced to $7$ by R. Gupta, the first author and 
V. Kumar Murty \cite{GMM}. In 1986,  D. R. Heath-Brown \cite{HB} 
refined further the size of this set to $3$ (also see \cite{MR}). 
At the moment, this seems to be the best result using unconditional methods.  

The first author in his Ph.D. thesis \cite{MRP}
introduced a non-abelian large sieve inequality and initiated 
a new approach towards resolving the Artin's primitive root conjecture.

The classical large sieve inequality asserts that
 $$
\sum_{d \leq Q}~~~  \sideset{}{^*}\sum_{\chi (\mod d)} 
\left|
\sum_{n \leq x} a_n \chi(n)
\right|^2
~\leq~
(Q^2 + x) \sum_{n \leq x} |a_n|^2,
$$
where $\sideset{}{^*}\sum_{\chi (\mod d)} $ means that 
the sum is over primitive Dirichlet characters $\chi~(\mod d)$. 
It can be viewed as {\em `an abelian large sieve inequality'} 
since it is essentially an inequality relating to Dirichlet characters 
(which are all one-dimensional). An adelic interpretation involving the Pr\"ufer group can be found in the first author's paper \cite{murty}.

For the sake of simplicity and to fix ideas, we will focus on $a=2$ in this paper.  It will be clear that the method can be generalized for suitable $a$ with obvious modifications.

For any odd prime $q$, let $\G_q$ denote the Galois group of 
$L_q = \Q(e^{\frac{2\pi i}{q}}, 2^{\frac{1}{q}})$ over $\Q$ and 
$\widehat{\G}_q$ denote the set of irreducible characters of $\G_q$. 
It is known that $\widehat{\G}_q$ contains $q$ irreducible characters 
and $(q-1)$ of them are one-dimensional. 
The remaining irreducible character of $\G_q$ denoted by $\varphi_q$ 
is $(q-1)$-dimensional (see subsection \ref{secGalChar}). For $p$ prime unequal to $q$, let
$\sigma_p$ denotes a Frobenius element of $p$ in $\G_q$.
In an analogy with the classical large sieve inequality, 
we will consider the following sum
\begin{equation}\label{eqalgLS}
\sum_{q \leq Q} \sum_{\varphi \in \widehat{\G}_q \atop \varphi \neq I} 
\left| \sideset{}{'}\sum_{p \leq x} \varphi(\sigma_p) \right|^2,
\end{equation}
where $\sideset{}{'}\sum_{p \leq x}$ means that the sum is over primes $p \leq x$ 
satisfying $i_p \leq \sqrt{x} \log x$ and $i_p$ denotes the index of $2~(\mod p)$. 
Note that $\#\{ 2 < p \leq x ~:~ i_p > \sqrt{x} \log x\} ~=~ o\(\pi(x)\) $(see \cite{MR}). 
Here $\sum_{q \leq Q}$ indicates that $q$ varies over odd primes $\leq Q$.

We see that the contribution from the one-dimensional characters of $\G_q$ in \eqref{eqalgLS} is
$$
\sum_{q \leq Q} \sum_{\chi (\mod q) \atop \chi \neq I}
\left| \sideset{}{'}\sum_{p \leq x} \chi(p) \right|^2
~\ll~
\(Q^2 + x \) \pi(x).
$$
We can express $\varphi_q(\sigma_p)$ as an exponential sum (see subsection \ref{secPhiqexp}):
\begin{eqnarray}\label{eqphiqexp}
\varphi_q\(\sigma_p\)
~=~
\frac{1}{q} \sum_{ c\in \F_q^* \atop d \in \F_q} e \( \frac{(p-1)d - i_pc}{q}\),
\end{eqnarray}
where $e(t) = e^{2\pi i t}$. Using \eqref{eqphiqexp}, 
we show that (see Lemma \ref{lemlsieveexp})
if $Q \leq x^{1-\epsilon}$ for some $\epsilon > 0$, then
$$
\sum_{q \leq Q} \sum_{\varphi \in \widehat{\G}_q \atop \varphi \neq I} 
\left| \sideset{}{'}\sum_{p \leq x} \varphi(\sigma_p) \right|^2 
~\ll~ 
(Q^2 + x) \pi(x) ~+~
\sum_{q \leq Q} \sum_{c, d \in \F_q^{{^*}} } 
\left|\sideset{}{'}\sum_{p \leq x}
e\(\frac{pd - i_p c}{q}\) \right|^2.
$$
Heuristically, we expect that
\begin{eqnarray}
\sum_{q \leq Q} \sum_{c, d \in \F_q^{^*} } 
\left|\sideset{}{'}\sum_{p \leq x}
e\(\frac{pd - i_p c}{q}\) \right|^2
~\ll~ (Q^3 + x) \pi(x).
\end{eqnarray}
The above inequality is related to non-abelian characters of 
Galois extensions $L_q$ over $\Q$, hence it may be regarded as 
`{\em a non-abelian large sieve inequality}'. The first author in his Ph.D. thesis (see \cite{MRP}) proved that the non-abelian large sieve inequality 
together with the hypothesis that the Dedekind zeta functions of $L_q$ have no zeros in the region $\Re(s)>3/4$
imply Artin's primitive root conjecture and an asymptotic formula for $N_a(x)$. In this article, we use certain sieve techniques, a seminal idea used by R. Gupta and the first author in \cite{GM}, instead of a quasi-GRH.

In this article, assuming the non-abelian large sieve inequality and using sieve techniques, we prove the following theorem.
\begin{thm}\label{thmNLS-AC}
Suppose that
$$
\sum_{q \leq Q} \sum_{c, d \in \F_q^{^*} } 
\left|\sideset{}{'}\sum_{p \leq x}
e\(\frac{pd - i_p c}{q}\) \right|^2
~\ll~ (Q^3 + x) \pi(x).
$$
Then there exist infinitely many primes $p$ such that 
$2$ is a primitive root $(\mod p)$. Further, we have
$$
\#\{ p \leq x ~:~ 2 \text{ is a primitive root } (\mod p)\} 
~\gg~ \frac{x}{\log^2 x}.
$$
\end{thm}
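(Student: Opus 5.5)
We combine the Gupta--Murty sieve with the hypothesised non-abelian large sieve. The hypothesis will control the primes $q$ in a middle range $x^{\delta}<q\le x^{1/2}$ dividing $i_p$; a lower-bound sieve will handle small $q$.

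\emph{Step 1: restrict to a sifted set.} Following \cite{GM}, we consider primes $p\le x$ with $(p-1)/2$ having no odd prime factor below $x^{\delta}$, or more precisely $p=2m+1$ with $m$ either prime or a product of two primes $>x^{\delta}$ (Chen-type switching). A lower-bound sieve (Selberg or Chen, with Bombieri--Vinogradov as level of distribution) gives $\gg x/\log^2 x$ such primes. For these $p$, any odd prime $q\mid i_p$ satisfies $q\mid p-1$, so $q>x^{\delta}$. We then discard those $p$ for which $2\mid i_p$, or ensure $2$ is a non-residue, via a congruence condition such as $p\equiv 3,5 \pmod 8$ built into the sieve. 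We also discard the $o(\pi(x))$ primes with $i_p>\sqrt{x}\log x$. Actually $o(\pi(x))$ is too weak against $x/\log^2x$, so instead we use the bound $\#\{p: q\mid i_p\}\ll$ (count of $p$ with $p\mid 2^{k}-1$, $k\le x/q$) $\ll x^2/(q^2\log x)$ to kill $q>x^{1/2}\log x$ directly, as in Hooley's treatment. This leaves the range $x^{\delta}<q\le x^{1/2}\log x$. The subrange $x^{1/2}<q\le x^{1/2}\log x$ is handled by the same counting, since it gives $\ll x/\log x\cdot \sum 1/q$ over a thin range, which is $o(x/\log^2x)$ after a little care. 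Otherwise we shrink $\delta$.

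\emph{Step 2: detect $q\mid i_p$ with characters.} For $q\mid p-1$, $p$ splits completely in $L_q$ exactly when $q\mid i_p$, that is when $\sigma_p=1$. By orthogonality, $\#\{p\in S: q\mid i_p,\, q\mid p-1\}$ equals
\[\frac{1}{q(q-1)}\sum_{p\in S}\sum_{\varphi\in\widehat{\G}_q}\varphi(1)\varphi(\sigma_p).\]
The trivial character gives $|S|/(q(q-1))$, which sums over $q>x^\delta$ to $\ll |S|x^{-\delta}$. The $\varphi_q$ term carries the weight $(q-1)/(q(q-1))=1/q$. The one-dimensional terms are handled by the abelian large sieve, or, more simply, we replace everything by the exponential form \eqref{eqphiqexp} and Lemma \ref{lemlsieveexp}. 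We insert a dyadic decomposition $Q<q\le 2Q$. Cauchy--Schwarz gives
\[\sum_{Q<q\le 2Q}\frac1q\Big|\sideset{}{'}\sum_{p}\varphi_q(\sigma_p)\Big|\le \frac1Q\,Q^{1/2}\Big(\sum_q\Big|\sideset{}{'}\sum_p\varphi_q(\sigma_p)\Big|^2\Big)^{1/2}.\]
By \eqref{eqphiqexp} and the hypothesis, this is $\ll Q^{-1/2}\big((Q^3+x)\pi(x)\big)^{1/2}/Q^{?}$. Here $|\varphi_q(\sigma_p)|^2$ expands into a $c,d$ double sum with a factor $1/q^2$, and that is where the $Q^3$ is absorbed. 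The result is $\ll (Q^{1/2}+x^{1/2}Q^{-3/2})\pi(x)^{1/2}\cdot(\text{normalisation})$. The sifting restriction to $S$ (a subset of primes) must be incorporated into the coefficients. Since the hypothesis is stated for the full $\sideset{}{'}\sum$, we use instead the duality principle: the large sieve bound is a bound on an operator norm, so it holds with arbitrary weights $a_p$ supported on primes. This is the step I expect to be the main obstacle, since one must check that the conjectured inequality is (or can be assumed) weighted, and that summing dyadically over $x^\delta<Q\le x^{1/2}$ yields a total $o(x/\log^2x)$. With $Q\le x^{1/2}$ the error is about $x^{1/2}Q^{-1/2}\cdot x^{1/2}/\sqrt{\log x}$, which is acceptable only for $Q\ge x^{\delta}$ with a power saving, as arranged in Step 1.

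\emph{Step 3: conclude.} The count is $|S|$ minus the contributions from $q\le x^\delta$ (zero by construction), $x^\delta<q\le x^{1/2}\log x$ (which is $o(x/\log^2x)$ by Step 2), and $q>x^{1/2}\log x$ (also $o$ by Hooley's bound). This gives $\gg x/\log^2 x$ primes for which $2$ is a primitive root.
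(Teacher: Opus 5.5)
\textbf{Verdict: genuine gap.} Your architecture is the paper's: a Chen/Gupta--Murty sifted set, then removing primes $q\mid i_p$ via Chebotarev orthogonality, Cauchy--Schwarz and the hypothesis. But two steps fail as written.

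\textbf{First gap: the duality step is invalid.} The hypothesis is one inequality for the specific unweighted sum $\sideset{}{'}\sum_{p\le x}$. It is not an operator-norm bound. Theorem~\ref{thmDual} needs the inequality for \emph{all} coefficient sequences, so it cannot produce a version weighted by the indicator of $S$.

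No weights are needed anyway, since you only need an \emph{upper} bound for what you subtract. By positivity, $\#\{p\in S:\ q\mid i_p,\ i_p\le\sqrt{x}\log x\}\le\tilde\pi_1(x,L_q/\Q)$, the count of \emph{all} primes $p\le x$ with $i_p\le\sqrt{x}\log x$ that split completely in $L_q$. Orthogonality expresses this through the unweighted sums $\tilde\pi(x,\varphi)=\sideset{}{'}\sum_{p\le x}\varphi(\sigma_p)$, which is exactly what the hypothesis controls. This is what the paper does.

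\textbf{Second gap: the top of the range near $q\approx\sqrt{x}$.} Your guessed block bound $(Q^{1/2}+x^{1/2}Q^{-3/2})\pi(x)^{1/2}$ is off by a factor $Q^{1/2}$, and this hides a real problem. The factor $1/q^2$ in $|\varphi_q(\sigma_p)|^2$ is used up by Cauchy--Schwarz over the $(q-1)^2$ pairs $(c,d)$, as in \eqref{eqexpCS}. So $\sum_{q\le 2Q}|\tilde\pi(x,\varphi_q)|^2\ll(Q^3+x)\pi(x)$ with no further saving, and a dyadic block gives
\begin{equation*}
\sum_{Q<q\le 2Q}\frac{|\tilde\pi(x,\varphi_q)|}{q}\ \ll\ \frac{\pi(2Q)^{1/2}}{Q}\bigl((Q^3+x)\pi(x)\bigr)^{1/2}\ \ll\ \frac{(Q+x^{1/2}Q^{-1/2})\,\pi(x)^{1/2}}{(\log Q)^{1/2}}.
\end{equation*}

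\begin{itemize}
\item The second term is harmless down to $Q=x^{\delta}$. Here your dyadic splitting beats the paper's single Cauchy--Schwarz plus partial summation, which needs $D=x^{0.26}>x^{1/4}$ so that $x/D^2<x^{1/2-\delta}$.
\item The first term only gives $O(x/\log x)$ when $Q\asymp x^{1/2}$. So the hypothesis disposes of $q\le x^{1/2}/\log^2x$, say, but not of the whole range $x^\delta<q\le x^{1/2}\log x$ that your Step~3 claims.
\end{itemize}

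Your thin-range remark does not close this. Brun--Titchmarsh over $x^{1/2}<q\le x^{1/2}\log x$ gives a bound of size $\frac{x}{\log x}\sum 1/q\asymp x\log\log x/\log^2x$, which is not $o(x/\log^2x)$. Summing $x^2/(q^2\log x)$ over $q$ is far worse. The primes with $i_p>\sqrt{x}\log x$ should instead be handled all at once, by counting $p$ with $\mathrm{ord}_p(2)<\sqrt{x}/\log x$.

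\textbf{How the paper handles the top range.} It uses the shape of $S(x)$. Since $q_1<x^{1/2-\delta}<q_2$, a prime $q>x^{1/2-\delta}$ can divide $i_p$ only as $q=q_2$. That forces $\mathrm{ord}_p(2)=2q_1$ and $p\mid 2^{2q_1}-1$, which happens for $\ll\sum_{q_1<x^{1/2-\delta}}q_1\ll x^{1-2\delta}$ primes. The hypothesis is only invoked for $q<x^{1/2-\delta}$.

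With your Chen-type set you would instead need an upper-bound sieve for $p=2qq'+1$ with $q'$ prime. This gives $\ll x/(q\log^2x)$ for each $q$, and hence $O(x\log\log x/\log^3x)$ over $x^{1/2}/\log^2x<q\le x^{1/2}\log x$.
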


\begin{rmk}
Let $a \neq 0, \pm 1$ be an integer that is not a perfect square. 
As stated earlier, 
for the sake of simplicity, we have chosen $a = 2$ in \thmref{thmNLS-AC} and similar
arguments will work for any such integer $a$.
\end{rmk}

\begin{rmk}
For any odd prime $q$, let us define
$$
\Lambda_q(n) ~=~
\begin{cases}
\varphi_q(\sigma_p) \log p 
& \text{if } n = p^m, \text{ a prime power}, \\
0 & \text{otherwise},
\end{cases}
$$
and set
$$
\psi(x, \varphi_q) ~=~ \sum_{n \leq x} \Lambda_q(n).
$$
Suppose that the Generalized Riemann Hypothesis (GRH) is true, 
then we can show that 
$$
\psi(x, \varphi_q) ~\ll~ q x^{\frac{1}{2}} \log^2 x
$$
uniformly for $q \leq x^{\frac{1}{4}}$.	Thus, we deduce that
\begin{eqnarray}\label{eqpiqGRH}
\pi(x, \varphi_q)  ~=~ \sum_{p \leq x} \varphi_q(\sigma_p) 
~\ll~ q x^{\frac{1}{2}} \log x
\end{eqnarray}
uniformly for $q \leq x^{\frac{1}{4}}$ (see \cite[Theorem 1.1]{LO}).
Hence if $Q \leq x^{\frac{1}{4}}$, then under the assumption of GRH, 
we deduce that
\begin{eqnarray}\label{eqnlsGRH}
\sum_{q \leq Q} \left| \sum_{p \leq x} \varphi_q(\sigma_p) \right|^2 
~\ll~
Q^3 x \log^2 x.
\end{eqnarray}
We remark that if the inequality in \eqref{eqnlsGRH} is true, 
then arguing as in the proof of \thmref{thmNLS-AC}, we can deduce that 
there exist infinitely many primes $p$ such that $2$ is a primitive root $(\mod p)$. 
Further, we can show that the number of primes $p$ for which 
$2$ is a primitive root $(\mod p)$ is $\gg \frac{x}{\log^2 x}$.
This shows that only the GRH ``on average'' is needed in resolving Artin's conjecture.
\end{rmk}

We also study non-abelian large sieve inequalities with arbitrary coefficients. 
In this context, unconditionally, we prove the following theorem.
\begin{thm}\label{thmN-abLS} 
Let $f$ be an integer-valued arithmetic function satisfying 
$1 \leq f(n) \leq C x^{\theta}$
for some positive real numbers $\theta$, $C$ and for all $n \leq x$.
Let $\(a_n\)_{n \geq 1}$ be a sequence of complex numbers. Then we have
$$
\sum_{q \leq Q} \sum_{c , d\in \F_q^{^*}} 
\left| \sum_{n \leq x} a_n ~e\(\frac{nd + f(n)c}{q}\) \right|^2
~\ll~
(Q+x^\theta)(Q^2+x) \sum_{n \leq x} |a_n|^2.
$$
\end{thm}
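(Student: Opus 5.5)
The plan is to separate the two phases $nd/q$ and $f(n)c/q$ by grouping the terms according to the value of $f(n)$. The $c$-sum is then handled by orthogonality of additive characters modulo $q$, and the $d$-sum by the classical (additive) large sieve inequality. Since $f$ is integer-valued with $1\le f(n)\le Cx^{\theta}$ for $n\le x$, put $M=\lfloor Cx^\theta\rfloor$ and, for $1\le m\le M$ and real $\alpha$, define
$$
S_m(\alpha)~=~\sum_{n\le x \atop f(n)=m} a_n\, e(n\alpha).
$$
Then for every prime $q$ and all $c,d$,
$$
\sum_{n\le x} a_n\, e\(\frac{nd+f(n)c}{q}\)~=~\sum_{m\le M} e\(\frac{mc}{q}\) S_m\(\frac dq\).
$$

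First, for fixed $q$ and $d$, enlarge the $c$-sum from $\F_q^*$ to all of $\F_q$; this only adds nonnegative terms. Grouping $m$ by its residue class modulo $q$ and using orthogonality of additive characters gives
$$
\sum_{c\in\F_q}\Bigl|\sum_{m\le M} e\(\frac{mc}{q}\)S_m\(\frac dq\)\Bigr|^2 ~=~ q\sum_{r \in \F_q}\Bigl|\sum_{m\le M\atop m\equiv r\,(\mod q)} S_m\(\frac dq\)\Bigr|^2 .
$$
Each residue class contains at most $M/q+1$ integers $m\le M$. Applying Cauchy--Schwarz inside each class therefore bounds the right-hand side by
$$
q\(\frac{M}{q}+1\)\sum_{m\le M}\Bigl|S_m\(\frac dq\)\Bigr|^2 ~\le~ (Cx^\theta+Q)\sum_{m\le M}\Bigl|S_m\(\frac dq\)\Bigr|^2 .
$$

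Next, sum over $d\in\F_q^*$ and over primes $q\le Q$, and interchange with the $m$-sum. This leaves, for each fixed $m$, the quantity $\sum_{q\le Q}\sum_{d\in\F_q^*}|S_m(d/q)|^2$. The points $d/q$ with $q\le Q$ prime and $1\le d\le q-1$ are distinct reduced fractions and are $Q^{-2}$-well-spaced modulo $1$. Hence the classical large sieve inequality for additive characters (the additive form of the inequality quoted in the introduction) bounds this quantity by
$$
(Q^2+x)\sum_{n\le x,\ f(n)=m}|a_n|^2 .
$$
Because the sets $\{n\le x: f(n)=m\}$ partition $\{n\le x\}$, summing over $m$ gives
$$
\sum_{q \leq Q} \sum_{c , d\in \F_q^{*}} \Bigl| \sum_{n \leq x} a_n\, e\(\frac{nd + f(n)c}{q}\) \Bigr|^2 ~\le~ (Cx^\theta+Q)(Q^2+x)\sum_{n\le x}|a_n|^2,
$$
which is the claimed bound.

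The only step needing care is the Cauchy--Schwarz step in the $c$-aspect. There one must count the $m$'s in a residue class modulo $q$ as $\ll M/q+1$ and not simply $M$; this is exactly what produces the factor $(Q+x^\theta)$ rather than $Qx^\theta$. The hypotheses that $f$ is integer-valued and bounded by $Cx^\theta$ are used precisely here.
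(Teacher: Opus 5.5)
Your proof is correct and follows the paper's argument step for step: group the terms by $m=f(n)$, bound the $c$-sum for fixed $q,d$ by $(q+Cx^{\theta})\sum_{m}|S_m(d/q)|^2$, then apply the additive large sieve over the $Q^{-2}$-spaced points $d/q$ separately for each $m$, and sum over the partition $\{n\le x: f(n)=m\}$. The only difference is cosmetic: you derive the $c$-step directly from orthogonality on $\F_q$ and Cauchy--Schwarz within residue classes modulo $q$, whereas the paper simply quotes the large sieve with $\delta=1/q$ (\thmref{thmLS1}).
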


As a consequence of \thmref{thmN-abLS}, we have the following theorem.
\begin{thm}\label{thmNLSunc}
We have
\begin{eqnarray*}
\sum_{q \leq Q} \sum_{c, d \in \F_q^{^*} } 
\left|\sideset{}{'}\sum_{p \leq x}
e\(\frac{pd - i_p c}{q}\) \right|^2
~\ll~ (Q + x^{\frac{1}{2}} \log x) (Q^2 + x) \pi(x).
\end{eqnarray*}
\end{thm}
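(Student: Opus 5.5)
We plan to deduce \thmref{thmNLSunc} directly from \thmref{thmN-abLS} by choosing the coefficients $a_n$ and the function $f$ appropriately. We take $a_n = 1$ if $n = p$ is a prime $\leq x$ with $i_p \leq \sqrt{x}\log x$, and $a_n = 0$ otherwise. With this choice, $\sum_{n\le x}|a_n|^2 \le \pi(x)$.

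The sign in $e\((pd - i_p c)/q\)$ is handled by replacing $c$ with $-c$. This is a bijection of $\F_q^*$, so the left side equals
$$
\sum_{q \leq Q} \sum_{c, d \in \F_q^{^*}} \left|\sum_{n\le x} a_n\, e\(\frac{nd + f(n)c}{q}\)\right|^2
$$
for any integer-valued $f$ with $f(p) = i_p$ on the support of $a_n$.

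We then define $f(n) = i_n$ when $n$ is an odd prime with $i_n \le \sqrt{x}\log x$, and $f(n)=1$ otherwise. (The prime $p=2$ has no index, and contributes at most one term, which can be dropped or absorbed trivially.) Then $f$ is integer valued and satisfies $1 \le f(n) \le \sqrt{x}\log x$ for all $n \le x$.

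The one point needing care is that \thmref{thmN-abLS} is stated with the bound $Cx^\theta$ for fixed $\theta$, whereas here the bound is $x^{1/2}\log x$. We would therefore apply \thmref{thmN-abLS} in the form where the parameter $x^\theta$ is replaced by any bound $M$ with $1\le f(n)\le M$, yielding the factor $(Q+M)$. This is presumably how its proof (via duality) actually runs: the factor comes from counting residues of $f(n)$ modulo $q$, or from the range of $f$. With $M = \sqrt{x}\log x$ we get
$$
\ll (Q + x^{1/2}\log x)(Q^2+x)\,\pi(x),
$$
which is the claim. If one insists on the literal statement with $\theta$ fixed, taking $\theta = 1/2+\epsilon$ would lose a power $x^\epsilon$. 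So the main, though mild, obstacle is checking that the implied constant in \thmref{thmN-abLS} depends only on $C$ (and not on $\theta$). Granting this, we may set $C = \log x$, $\theta=1/2$, and the dependence of the bound on $C$ is linear through $Q + Cx^\theta$.
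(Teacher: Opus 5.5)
Your proposal is correct and matches the paper's proof, which takes the same indicator coefficients and the same $f$ ($f(n)=i_n$ on the support, $f(n)=1$ otherwise) and then reruns the argument of Theorem~\ref{thmN-abLS}. Your uniformity concern is settled by that argument, which does not use duality. It groups $n$ by $m=f(n)$, applies Theorem~\ref{thmLS1} in $c$ to a sequence of length $M=\sqrt{x}\log x$ (giving the factor $q+M$ directly), and then applies Theorem~\ref{thmLS2} in $d$; the substitution $c\mapsto -c$ you note is harmless.
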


\medspace

Note that the index function $i_p$ satisfies $i_p \mid (p-1)$, 
thus we impose the following restrictions on $f$:
\begin{itemize}
\item[(i)] There exists a positive constant $C> 0$ and $\theta \in (0, 1)$ 
such that $ 1 \leq f(n) \leq C x^\theta$ for any $n \leq x$;
\item[(ii)] $f(n) \mid (n-1)$.
\end{itemize}
In this set-up, we prove the following theorem.
\begin{thm}\label{thmNLS}
Suppose that $Q \gg \log x$. Then we have
$$
\sum_{q \leq Q} \sum_{c, d \in \F_q^{^*}} \left|
\sum_{n \leq x \atop (q, f(n))=1} a_n ~e\( \frac{nd+f(n)c}{q} \)
\right|^2
~\ll~(Qx + Q^3 x^\theta \log Q) \sum_{n \leq x} |a_n|^2
$$
for any sequence $(a_n)_n$ of complex numbers.
\end{thm}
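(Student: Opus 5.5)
The plan is to exploit the divisibility $f(n)\mid(n-1)$ by grouping the terms according to the value $k=f(n)$. For each $k\le Cx^\theta$ I would put
$$S_k(\alpha)~=~\sum_{n\le x,\ f(n)=k} a_n\, e(n\alpha).$$
For a fixed prime $q$ with $(q,k)=1$, the inner sum in the statement becomes
$$\sum_{k} e\(\frac{kc}{q}\) S_k\(\frac dq\).$$
Since $k\mid n-1$, every $n$ with $f(n)=k$ has the form $n=1+km$ with $m\le x/k$. Hence
$$S_k\(\frac dq\) ~=~ e\(\frac dq\)\sum_{m\le x/k} a_{1+km}\, e\(\frac{m(kd)}{q}\),$$
and because $(k,q)=1$, the map $d\mapsto kd$ permutes $\F_q^*$. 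So for fixed $k$ the sum over $d$ is a complete set of additive characters acting on a sequence of length $x/k$. This is the point where hypothesis (ii) and the coprimality condition $(q,f(n))=1$ enter.

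First I would remove the $c$-sum. Extending $c$ from $\F_q^*$ to all residues mod $q$ only adds a nonnegative term, and orthogonality then gives
$$\sum_{c \bmod q}\Big|\sum_k e\(\frac{kc}{q}\)S_k\(\frac dq\)\Big|^2 ~=~ q\sum_{\substack{k,k'\\ k\equiv k' \,(\mathrm{mod}\ q)}} S_k\(\frac dq\)\overline{S_{k'}\(\frac dq\)}.$$
I would then split this into the diagonal $k=k'$ and the off-diagonal $k\neq k'$ with $q\mid k-k'$.

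For the diagonal, summing over $d\in\F_q^*$ and applying the single-modulus large sieve (or simply Parseval over $d$ mod $q$) to the sequence of length $x/k$ gives
$$\sum_d |S_k(d/q)|^2 ~\le~ (q+x/k)\sum_{f(n)=k}|a_n|^2 .$$
Multiplying by $q$ and summing over primes $q\le Q$ gives $\ll (Q^3+Qx)\sum_n|a_n|^2$, which is within the claimed bound. The hypothesis $Q\gg\log x$ should be what allows the lower-order terms arising here, such as the $\pi(Q)$ factors, to be absorbed.

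The off-diagonal terms are the main obstacle. Here I would use $|S_kS_{k'}|\le\frac12(|S_k|^2+|S_{k'}|^2)$. For each $k$, the number of $k'\le Cx^\theta$ with $k'\equiv k \ (\mathrm{mod}\ q)$ is $\ll x^\theta/q+1$. The danger is pairing the $x^\theta/q$ count with the $x/k$ length, which would give an unacceptable term of size $x^{1+\theta}$. To avoid this, I would reverse the order of summation: for a fixed pair $k\ne k'$, the condition $q\mid k-k'$ allows only $\ll\log x^\theta/\log\log$ primes $q$, and all of them satisfy $q\le Cx^\theta$. I expect this divisor count, together with summing $q^2$ over primes up to $Q$, to produce the $Q^3x^\theta\log Q$ term. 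In the off-diagonal part I would use only the trivial bound $\sum_{d}|S_k(d/q)|^2\le q\sum_{f(n)=k}|a_n|^2\cdot\#\{n: f(n)=k\}$ when $x/k$ is small, and the large sieve when $x/k$ is large. The split point is chosen so that the contribution becomes $q^2$ times the count of admissible $k'$, giving $\sum_q q^2\,(x^\theta/q+1)$ up to the logarithmic factor. If this balancing fails, my fallback is a duality argument: bound the dual form $\sum_n|\sum_{q,c,d}b_{q,c,d}\,e((nd+f(n)c)/q)|^2$ and use the same congruence counting there, in the spirit of the duality techniques announced in the abstract.
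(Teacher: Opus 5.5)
\textbf{There is a genuine gap, in the off-diagonal step.} Your diagonal step $k=k'$ also has a slip. Summing $q(q+x/k)$ over primes $q\le Q$ gives about $Q^3/\log Q+Q^2x/(k\log Q)$. For $k=1$ the term $Q^2x/\log Q$ is not $\ll Qx+Q^3x^\theta\log Q$. This is repairable: bound the factor $q$ by $Q$ and apply the Farey large sieve (Theorem~\ref{thmLS2}) jointly over $q\le Q$, $d\in\F_q^*$, which gives $Q(Q^2+x)\sum_n|a_n|^2$.

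The off-diagonal $k\neq k'$, $k\equiv k'\ (\mathrm{mod}\ q)$, is not repairable along your lines. After $|S_k\overline{S_{k'}}|\le\frac12(|S_k|^2+|S_{k'}|^2)$, each $k$ is charged $\#\{k'\neq k:\ k'\equiv k\ (\mathrm{mod}\ q),\ k'\le Cx^\theta\}\asymp x^\theta/q$. The factor $q$ from orthogonality in $c$ cancels this $1/q$, leaving $x^\theta\sum_k\sum_{q\le Q}\sum_{d}|S_k(d/q)|^2\ll x^\theta(Q^2+x)\sum_n|a_n|^2$. Together with the diagonal, this is exactly Theorem~\ref{thmN-abLS}. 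Its term $x^{1+\theta}$ exceeds $Qx+Q^3x^\theta\log Q$ whenever $Q$ is below both $x^\theta$ and roughly $x^{1/3}$, which is precisely the range where Theorem~\ref{thmNLS} says something new.

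None of your proposed refinements changes this:
\begin{itemize}
\item Reversing the order of summation does not change the number of triples $(k,k',q)$ with $q\mid k-k'$.
\item The ``trivial bound'' $q\,\#\{n:f(n)=k\}\sum_{f(n)=k}|a_n|^2$ is weaker than the large sieve, not stronger.
\item No split point helps. For $k=1$ the sum $S_1$ has length $x$, and $\sum_{d}|S_1(d/q_0)|^2$ really is $\asymp x\sum_n|a_n|^2$ when $a_n$ is the indicator of $\{n\le x:\ f(n)=1,\ n\equiv 1\ (\mathrm{mod}\ q_0)\}$.
\end{itemize}
In that last example the true off-diagonal vanishes identically, since every $S_{k'}$ with $k'\ne1$ is zero. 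Yet your bound charges it $\asymp x^{1+\theta}\sum_n|a_n|^2$. The loss comes from bounding cross terms between different values of $f(n)$ by absolute values.

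\textbf{What is missing is the duality argument, carried out concretely.} Your fallback names it, but the mechanism is not congruence counting. By Theorem~\ref{thmDualVar} it suffices to bound
$\sum_{n\le x}|\sum_{q,\,(q,f(n))=1}\sum_{c,d}A(q,c,d)\,e((nd+f(n)c)/q)|^2$.
Group by $m=f(n)$ and \emph{enlarge} $\{n:f(n)=m\}$ to the whole progression $\{n\le x:\ n\equiv 1\ (\mathrm{mod}\ m)\}$. This uses positivity, and it is where $f(n)\mid n-1$ enters. It puts $m$ outside the square, so different values of $f(n)$ never interact.

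After expanding the square, the $n$-sum is an explicit geometric series:
\begin{itemize}
\item For $(q_1,d_1)=(q_2,d_2)$ it equals $x/m+O(1)$. The oscillating sum $\sum_{m}\frac1m e(m(c_1-c_2)/q)\ll\log(1/\|(c_1-c_2)/q\|)$ sums to $O(q)$ over $c_2$, giving $Qx$. The terms with $c_1=c_2$ give $x\log x$, which is absorbed using $Q\gg\log x$.
\item For $(q_1,d_1)\neq(q_2,d_2)$ it is $\ll 1/\|m(d_1/q_1-d_2/q_2)\|$. Since $(m,q_2)=1$, the points $md_2/q_2$ are again $Q^{-2}$-spaced Farey fractions, so the sum over $(q_2,d_2)$ is $\ll Q^2\log Q$. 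The free variables $c_2$ and $m$ contribute $Q\cdot x^\theta$, giving $Q^3x^\theta\log Q$.
\end{itemize}
In this form, the interaction you tried to control by counting $k'\equiv k$ is handled instead by the $1/m$ weights and the oscillation in $m$.
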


We observe that $\varphi_q(\sigma_p) \neq 0$ implies that 
$p \equiv 1 (\mod q)$. Further, if $p \equiv 1 (\mod q)$, 
then from \eqref{eqphisigmap}, we have
\begin{eqnarray*}
\varphi_q(\sigma_p) ~=~ \sum_{c \in \F_{q}^{*}} e\(\frac{i_p c}{q}\).
\end{eqnarray*}
Using this we show that (see Lemma \ref{EqPhiq=1(q)})
$$
\sum_{q \leq Q} \left| \sum_{p \leq x} \varphi_q(\sigma_p) \right|^2
~\leq~ 
Q \sum_{q \leq Q} \sum_{c \in \F_{q}^{^*}} \left|
\sum_{p \leq x \atop p \equiv 1 (\mod q)} e\(\frac{i_p c}{q}\)
\right|^2.
$$
In this context, we have the following result.
\begin{thm}\label{thm-NLS-opt}
Suppose that $Q \leq x^{1 - \epsilon}$ for some $\epsilon > 0$. Then we have
\begin{eqnarray*}
\sum_{q \leq Q} \sum_{c \in \F_{q}^{^*}} \left|
\sum_{p \leq x \atop p \equiv 1 (\mod q)} a_p~e\(\frac{i_p c}{q}\)
\right|^2
~\ll_{\epsilon}~ \frac{x}{\log\log x} \sum_{p \leq x} |a_p|^2
\end{eqnarray*}
for any sequence of complex numbers $(a_p)_p$. 
Suppose that GRH is true and $\log Q = o(\log x)$, 
then there exists a sequence $(b_p)_p$ of complex numbers such that
\begin{eqnarray*}
\sum_{q \leq Q} \sum_{c \in \F_{q}^{^*}} \left|
\sum_{p \leq x \atop p \equiv 1 (\mod q)} b_p~e\(\frac{i_p c}{q}\)
\right|^2
~\gg~ x \cdot \frac{\log\log Q}{\log x} \cdot  \sum_{p \leq x} |b_p|^2.
\end{eqnarray*}
\end{thm}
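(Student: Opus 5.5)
The plan is to handle the two parts separately. The upper bound is unconditional and should follow from Parseval plus Brun--Titchmarsh. The lower bound should follow from a specific choice of weights supported on primes for which $2$ is a primitive root, combined with Hooley/Lenstra-type asymptotics under GRH.

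\emph{Upper bound.} Fix an odd prime $q \leq Q$. For $r \in \F_q$ put
$$A_{q,r} = \sum_{p \leq x,\ p \equiv 1 (\mod q) \atop i_p \equiv r (\mod q)} a_p, \qquad N_{q,r} = \#\{p \leq x : p \equiv 1 (\mod q),\ i_p \equiv r (\mod q)\}.$$
Extending the sum over $c$ to all of $\F_q$ and applying orthogonality gives
$$\sum_{c \in \F_q^*}\Bigl|\sum_{p \leq x \atop p \equiv 1 (\mod q)} a_p\, e\Bigl(\frac{i_p c}{q}\Bigr)\Bigr|^2 \leq q \sum_{r \in \F_q} |A_{q,r}|^2 .$$
By Cauchy--Schwarz, $|A_{q,r}|^2 \leq N_{q,r} \sum_{p\equiv 1 (q),\, i_p \equiv r} |a_p|^2$. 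Trivially $N_{q,r} \leq \pi(x;q,1)$, and since $q \leq Q \leq x^{1-\epsilon}$, Brun--Titchmarsh gives $\pi(x;q,1) \ll_\epsilon x/(q\log x)$. Summing over $r$ and then over $q$, the left side of the theorem is
$$\ll_\epsilon \frac{x}{\log x} \sum_{q\leq Q} \sum_{p \leq x \atop p \equiv 1 (\mod q)} |a_p|^2 = \frac{x}{\log x}\sum_{p\leq x} |a_p|^2\, \#\{q \leq Q : q \mid p-1\}.$$
Finally, $\#\{q \mid p-1\} \leq \omega(p-1) \ll \log x/\log\log x$, which yields the bound $x/\log\log x$.

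\emph{Lower bound under GRH.} Take $b_p = 1$ if $2$ is a primitive root $(\mod p)$ and $b_p = 0$ otherwise. For such $p$ we have $i_p = 1$, so $e(i_pc/q) = e(c/q)$ does not depend on $p$. Hence the inner sum equals $e(c/q)\, M_q$, where
$$M_q = \#\{p \leq x : p \equiv 1 (\mod q),\ 2 \text{ is a primitive root } (\mod p)\}.$$
The left side is therefore exactly $\sum_{q \leq Q} (q-1) M_q^2$. On the other side, $\sum_p |b_p|^2 = N_2(x) \asymp x/\log x$ by Hooley's theorem.

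The key input is then a lower bound $M_q \gg \pi(x)/q$ holding uniformly for odd primes $q \leq Q$. I would obtain it from Hooley's method, as extended by Lenstra to primitive roots in arithmetic progressions. Under GRH, the density of primes $p \equiv 1 (\mod q)$ with $2$ primitive is given by an Euler-product expression. For odd $q$ the local factor at $q$ is $\frac{1}{q-1}(1-\frac1q)$, compared with $1 - \frac{1}{q(q-1)}$ in Artin's constant. This makes the density $\gg 1/q$ with an absolute implied constant; the only entanglement is the usual one at $2$ coming from $\Q(\sqrt 2) \subset \Q(\zeta_8)$, which is harmless for odd $q$. Granting this, we get
$$\sum_{q\leq Q} (q-1) M_q^2 \gg \pi(x)^2 \sum_{q \leq Q} \frac1q \gg \pi(x)^2 \log\log Q \gg x\cdot\frac{\log\log Q}{\log x}\cdot \sum_p |b_p|^2 .$$

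\emph{Main obstacle.} I expect the hard step to be the uniformity in $q$ of the Hooley--Lenstra asymptotic. One has to rerun Hooley's argument in the fields $\Q(\zeta_{\mathrm{lcm}(q,k)}, 2^{1/k})$ with GRH-effective Chebotarev bounds. The error terms grow polynomially in $q$ and in the field degree, and one must check that they stay $o(\pi(x)/q)$ for $q \leq Q$. The hypothesis $\log Q = o(\log x)$ is presumably what provides this room. Concretely, the medium range of $k$ handled by Brun--Titchmarsh and the large range handled by the $i_p$-counting argument only need $q \leq x^{o(1)}$.
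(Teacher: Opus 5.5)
Your proof is correct and follows the paper's. The upper bound is Cauchy--Schwarz, Brun--Titchmarsh (using $q\le x^{1-\epsilon}$) and $\#\{q\mid p-1\}\ll \log x/\log\log x$; your Parseval-over-$\F_q$ step is an inessential variant of the paper's termwise Cauchy--Schwarz. The lower bound uses the same $b_p$ and the same reduction to $\sum_{q\le Q}(q-1)M_q^2$.

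The uniform GRH bound $M_q\gg\pi(x)/\varphi(q)$ for $\log Q=o(\log x)$, which you single out as the main obstacle, is simply quoted in the paper from \cite[Theorem 1.2]{Mo} and \cite[Theorem 1.1]{Zo}. If you rerun Hooley's argument yourself, his cutoffs $\sqrt{x}/\log^2 x$ and $\sqrt{x}\log x$ must be adjusted in terms of $q$. Unadjusted, they leave errors of size $x/\log^2 x$, which is not $o(\pi(x)/q)$ once $q\gg\log x$.
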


\medspace

\section{Prerequisites}

\subsection{The Galois group of $\Q(e^{\frac{2\pi i}{q}}, 2^{\frac{1}{q}})$ over $\Q$ and its characters}\label{secGalChar}
For any odd prime $q$, let $L_q = \Q(\zeta_q, 2^{\frac{1}{q}})$ 
and ${\rm G}_q = {\rm Gal}(L_q/\Q)$, where $\zeta_q = e^{\frac{2\pi i}{q}}$. 
Any element $\sigma \in {\rm G}_q$ is completely determined by 
its values on  $\zeta_q$ and  $2^{\frac{1}{q}}$:
\begin{eqnarray*}
\sigma(\zeta_q) ~=~ \zeta_{q}^{a}, ~~ \sigma(2^{\frac{1}{q}})
~=~ \zeta_{q}^{b} 2^{\frac{1}{q}},\phantom{mm} a \in \F_q^{^*},~~ b \in \F_q.
\end{eqnarray*}
It is easy to see that $\G_q$ is isomorphic to the following subgroup of ${\rm GL}_2(\F_q)$:
\begin{eqnarray*}
\left\{
\begin{bmatrix}
	a & b \\
	0 & 1
\end{bmatrix}
~:~
a \in \F_q^{^*},~~ b \in \F_q
\right\}.
\end{eqnarray*}
For any $(a, b), (r, s) \in \F_q^{^*} \times \F_q$, we note that
\begin{eqnarray*}
\begin{bmatrix}
r & s \\ 0 & 1\\
\end{bmatrix}
\begin{bmatrix}
a & b \\ 0 & 1\\
\end{bmatrix}
\begin{bmatrix}
r & s \\ 0 & 1\\
\end{bmatrix}^{-1}
~=~
\begin{bmatrix}
a & br+ (1-a)s \\ 0 & 1\\
\end{bmatrix}.
\end{eqnarray*}
Hence, we can see that the group $\G_q$ has exactly $q$ conjugacy classes which can be 
described as follows (see \cite{MRP} for more details):
\begin{eqnarray*}
I &~=~& \left\{ 
\begin{bmatrix}
    1 & 0 \\
	0 & 1
\end{bmatrix}
\right\},\\
C_a &~=~& \left\{ 
\begin{bmatrix}
	a & b \\
	0 & 1
\end{bmatrix}
~:~ b \in \F_q 
\right\},\phantom{mm}
a \in \{2, 3, \cdots, q-1\}, \\
D &~=~&
\left\{ 
\begin{bmatrix}
	1 & b \\
	0 & 1
\end{bmatrix}
~:~ b \in \F_{q}^{^*}
\right\}.
\end{eqnarray*}
Thus $\G_q$ has $q$ irreducible characters and $(q-1)$ of them are one-dimensional 
which are related to the Dirichlet characters $(\mod q)$ (see \cite{Go, MRP}) :
Let $\chi_i ~ (i =1, 2, \cdots, q-1)$ be Dirichlet characters $(\mod q)$. 
Then define
\begin{eqnarray}\label{eqphii}
\varphi_i\left( \begin{bmatrix}
	a & b \\
	0 & 1
\end{bmatrix} \right) 
~=~ \chi_i(a),
\phantom{mm} a \in \F_q^{^*},~~ b \in \F_q.
\end{eqnarray}
 Let us denote the remaining irreducible character of $\G_q$ by $\varphi_q$. 
 Then $\varphi_q$ is  $(q-1)$-dimensional (see \cite[Corollary 2, p. 18]{Se}) 
 and using orthogonality relations, we see that
 \begin{eqnarray*}
 \varphi_q (I) ~=~ q-1, ~~ \varphi_q(C_a) ~=~ 0,~~ \varphi_q(D) ~=~ -1.
 \end{eqnarray*}
 In fact, we can express the values of $\varphi_q$ as an exponential sum:
 \begin{eqnarray}\label{eqphiexpsum}
 \varphi_q\(
\begin{bmatrix}
	a & b \\
	0 & 1
\end{bmatrix} \)
~=~
\frac{1}{q} \sum_{ c\in \F_q^* \atop d \in \F_q} e \( \frac{(a-1)d - bc}{q}\),
 \end{eqnarray}
 where $e(t) = e^{2\pi i t}$.
 
 \medspace

\subsection{The Artin symbols in $\G_q$ and values of $\varphi_i$ at these elements}\label{secPhiqexp}
Let $p$ be an odd prime relatively prime to $q$ and $\sigma_p$ be a 
Frobenius element of $p$ in $\G_q$. Suppose that $p \equiv a ~(\mod q)$ 
with $a \in \{1, 2,\cdots, q-1\}$. Since $\Q(\zeta_q) \subset L_q$, 
we see that $\sigma_p(\zeta_q) = \zeta_{q}^{a}$ and hence
$$
\sigma_p ~=~ \begin{bmatrix}
	a & b \\
	0 & 1
\end{bmatrix}
$$
 for some $b \in \F_q$. If $a \neq 1$, then we conclude that 
 the Artin symbol $\(\frac{L_q/\Q}{p}\)$ of $p$ in $\G_q$ is equal to $C_a$. 
 Now we assume that $p \equiv 1 ~(\mod q)$. It is well known that $\sigma_p = I$ 
 if and only if $p$ splits completely in $L_q$ (see \cite[p. 18]{La}). 
 Hence we deduce that the Artin symbol of $p$ in $\G_q$ is given by
 \begin{eqnarray*}
 \left( \frac{L_q/\Q}{p} \right) ~=~
 \begin{cases}
 C_a & \text{if } p \equiv a \not\equiv 1 ~(\mod q), \\
 I & \text{if $p$ splits completely in $L_q$},\\
 D & \text{otherwise.}
 \end{cases}
 \end{eqnarray*}
Note that for abelian characters of $\G_q$, we have
\begin{eqnarray*}
\varphi_i(\sigma_p) ~=~ \chi_i(p), \phantom{ mm} i = 1, 2, \cdots, q-1.
\end{eqnarray*}
 Let $i_p = [\F_p^* : \langle 2 \rangle]$ denote the index of $2~(\mod p)$. 
 Then we have $q \mid i_p$ if and only if $q \mid p-1$ and 
 $2^{\frac{p-1}{q}} \equiv 1 ~(\mod p)$. Hence from Dedekind's theorem, 
 we have $q \mid i_p$ if and only if $p$ splits completely in $L_q$ (see \cite[p. 62]{MR}). 
 Thus we can take
 $$
 \sigma_p ~=~ 
 \begin{bmatrix}
 p & i_p \\
 0 & 1
 \end{bmatrix}.
 $$
Therefore from \eqref{eqphiexpsum}, we get
\begin{eqnarray}\label{eqphisigmap}
\varphi_q(\sigma_p) ~=~
\frac{1}{q} \sum_{ c\in \F_q^* \atop d \in \F_q} e \( \frac{(p-1) d - i_p c}{q}\).
\end{eqnarray}
 
 \medspace

 \subsection{Prerequisites from the classical large sieve inequality}
 Consider the following trigonometric sum:
 $$
 S(t) ~=~ \sum_{n\leq x} a_n e^{2 \pi i n t},
 $$
 where $\(a_n\)_{n \geq 1}$ is a sequence of complex numbers and $t$ is a real number.
 From \cite[Theorem 4, Section 2]{Bo}, we have the following result.
 \begin{thm}\label{thmLSdelta}
 Let $\delta \in (0, 1)$ be a real number. 
 Suppose the points $\(x_j\)_{1 \leq j \leq R}$ are $\delta$-spaced, 
 then we have
 $$
 \sum_{j=1}^{R} |S(x_j)|^2 ~\leq~ 
 \(x + \frac{1}{\delta}\)  \sum_{n \leq x} |a_n|^2.
 $$
 \end{thm}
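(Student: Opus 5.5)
We would prove Theorem~\ref{thmLSdelta} through its dual form, using the Beurling--Selberg extremal majorant to control the resulting quadratic form. Write $N = \lfloor x \rfloor$, so that $N \leq x$. By duality, applied to the $R \times N$ matrix $\(e(n x_j)\)_{j,n}$, whose operator norms on the two sides are equal, it suffices to show
\begin{eqnarray*}
\sum_{n \leq N} \left| \sum_{j=1}^{R} b_j\, e(n x_j) \right|^2 ~\leq~ \(N + \frac{1}{\delta}\) \sum_{j=1}^{R} |b_j|^2
\end{eqnarray*}
for every complex vector $(b_j)_j$. Indeed, $\sum_j |S(x_j)|^2 = \sum_j |\sum_n a_n e(nx_j)|^2$ is the squared norm of the image of $(a_n)_n$, so the bound for the transpose transfers verbatim with the same constant $N + 1/\delta \leq x + 1/\delta$.

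To prove the dual inequality, we would take Selberg's majorant. This is an entire function $F$ of exponential type $2\pi\delta$, real on $\R$, with $F(t) \geq \mathbf{1}_{[1,N]}(t)$ for all real $t$. Its Fourier transform $\widehat F$ is supported in $[-\delta, \delta]$, and $\widehat F(0) = \int_{\R} F = (N-1) + 1/\delta$. Since $F \geq 0$ at every integer and $F \geq 1$ at the integers $1,\dots,N$, we get
\begin{eqnarray*}
\sum_{n \leq N} \left| \sum_j b_j e(n x_j) \right|^2 ~\leq~ \sum_{n \in \Z} F(n) \left| \sum_j b_j e(n x_j) \right|^2 ~=~ \sum_{j,k} b_j \overline{b_k} \sum_{n \in \Z} F(n)\, e\(n(x_j - x_k)\).
\end{eqnarray*}
By Poisson summation, which is justified because $F$ is integrable with compactly supported transform, the inner sum equals $\sum_{m \in \Z} \widehat F(m - (x_j - x_k))$, up to the sign convention for the transform.

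The $\delta$-spacing is taken modulo $1$: for $j \neq k$, the distance from $x_j - x_k$ to the nearest integer is at least $\delta$. Hence every term $\widehat F(m-(x_j-x_k))$ with $j \neq k$ vanishes, apart from possible boundary points where $|m - (x_j - x_k)| = \delta$ exactly. At those points $\widehat F$ is zero, since it is continuous and supported in $[-\delta,\delta]$. For $j = k$ only $m = 0$ survives, because $\delta < 1$. The quadratic form is therefore $\widehat F(0)\sum_j |b_j|^2 = (N - 1 + 1/\delta)\sum_j|b_j|^2$, which is even slightly stronger than the claimed bound.

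The main obstacle is constructing $F$ and verifying its properties: the majorization, the support of $\widehat F$, and the exact value of $\int F$. We would quote Selberg's construction, built from Beurling's majorant $B(z)$ of $\mathrm{sgn}(z)$ via $\frac12\(B(\delta(z - 1)) + B(\delta(N - z))\)$, rather than rederive it. If we wanted to avoid it, Gallagher's lemma gives an elementary route, but only with the weaker constant $\pi x + 1/\delta$. Since the theorem states the sharp constant, we would rely on the Selberg majorant, or simply cite \cite{Bo}.
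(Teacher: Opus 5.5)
The paper gives no proof of this statement; it imports it from \cite[Theorem 4, Section 2]{Bo}. Your argument is a correct, self-contained proof along Selberg's lines.

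\begin{itemize}
\item The duality step is legitimate: $(e(nx_j))_{j,n}$ and its transpose have the same operator norm (cf.\ Theorem~\ref{thmDual}).
\item The majorant $F\ge\mathbf{1}_{[1,N]}$ of exponential type $2\pi\delta$ has $\widehat F$ continuous and supported in $[-\delta,\delta]$, with $\widehat F(0)=N-1+\delta^{-1}$.
\item $\delta$-spacing modulo $1$, together with $\delta<1$, removes every term except the diagonal $m=0$ terms.
\end{itemize}

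So you obtain the slightly stronger constant $N-1+\delta^{-1}\le x+\delta^{-1}$.

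The one step you leave implicit is the absolute convergence of $\sum_n F(n)$. It is needed to expand the square and to apply Poisson summation, and it is easily supplied. The function $P(y)=\sum_m\widehat F(m-y)$ is continuous with Fourier coefficients $F(n)\ge 0$. Fej\'er summation at $y=0$ then gives $\sum_n F(n)=P(0)<\infty$. The Plancherel--P\'olya inequality would also do.

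The two routes buy different things. The citation costs the paper nothing, and the constant $x+\delta^{-1}$ is all that Theorems~\ref{thmLS1} and \ref{thmLS2}, and everything downstream, actually use. Your route shows exactly where the spacing hypothesis enters and what it yields. The price is treating the existence of the Beurling--Selberg function, and Paley--Wiener, as black boxes. If you wanted a sharp constant without extremal functions, the Montgomery--Vaughan generalized Hilbert inequality also reaches the stated constant $N+\delta^{-1}$. Gallagher's lemma, as you note, only gives $\pi x+\delta^{-1}$.
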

 Note that  numbers in the set 
 $\left\{\frac{a}{d} ~:~ 1 \leq a \leq d \right\}$ are $\frac{1}{d}$-spaced. 
 Further, note that the numbers in the set 
 $\left\{\frac{a}{d} ~:~ 1 \leq a \leq d,~ (a, d)=1,~ 1 \leq d \leq Q \right\}$ 
 are $\frac{1}{Q^2}$-spaced. 
 Hence, as a consequence of \thmref{thmLSdelta}, we have the following results.
 \begin{thm}\label{thmLS1}
 Let $d$ be a positive integer. We have
 $$
 \sum_{a=1}^{d} \left| \sum_{n \leq x} a_n e^{\frac{2 \pi i a n}{d}} \right|^2 
 ~\leq~ 
 \(d + x\) \sum_{n \leq x} |a_n|^2.
 $$
 \end{thm}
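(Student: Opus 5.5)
This is a direct application of \thmref{thmLSdelta}. The plan is to exhibit, for the fixed modulus $d$, a family of $\delta$-spaced points with $1/\delta$ of size $d$, and then read off the bound.

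Take the points $x_a = \frac{a}{d}$ for $a = 1, \dots, d$. With these choices $S(x_a) = \sum_{n \le x} a_n e^{2\pi i a n/d}$, so the left-hand side of the statement is exactly $\sum_{a=1}^{d} |S(x_a)|^2$.

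We need these points to be $\frac{1}{d}$-spaced in the sense of \thmref{thmLSdelta}, meaning spaced modulo $1$. For distinct $a, a'$ in $\{1,\dots,d\}$, the difference $(a-a')/d$ is a nonzero multiple of $1/d$ with absolute value at most $(d-1)/d$. Hence its distance to the nearest integer is at least $\frac{1}{d}$.

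For $d \ge 2$ we have $\delta = \frac{1}{d} \in (0,1)$, and \thmref{thmLSdelta} gives
$$
\sum_{a=1}^{d} |S(a/d)|^2 \le (x + d) \sum_{n\le x} |a_n|^2,
$$
which is the claim.

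The only technical point is the boundary case $d=1$, where $\delta = 1$ falls outside the stated range $(0,1)$. In that case the left side is $\left|\sum_{n\le x} a_n\right|^2$. By Cauchy--Schwarz it is at most $\lfloor x\rfloor \sum_{n \le x} |a_n|^2 \le (1+x)\sum_{n\le x}|a_n|^2$, so the bound holds directly. Alternatively, one can apply \thmref{thmLSdelta} with any $\delta<1$ and let $\delta \to 1$. No genuine obstacle is expected beyond this bookkeeping.
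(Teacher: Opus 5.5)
Your proposal is correct and is essentially the paper's argument: the paper obtains this as an immediate consequence of \thmref{thmLSdelta}, noting that the points $a/d$, $1 \le a \le d$, are $\frac{1}{d}$-spaced. Your separate handling of $d=1$ (where $\delta = 1$ falls outside the stated range $(0,1)$) via Cauchy--Schwarz is a valid detail that the paper leaves implicit.
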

 
 \begin{thm}\label{thmLS2}
 We have
 $$
 \sum_{d \leq Q} \sum_{a=1 \atop (a, d)=1}^{d} 
 \left| \sum_{n \leq x} a_n e^{\frac{2 \pi i a n}{d}} \right|^2
 ~\leq~
 \(Q^2 + x\) \sum_{n \leq x} |a_n|^2.
 $$
 \end{thm}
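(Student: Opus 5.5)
The plan is to deduce this directly from \thmref{thmLSdelta}, applied to the Farey fractions
$$
\mathcal{F}_Q ~=~ \left\{\frac{a}{d} ~:~ 1 \leq a \leq d,~ (a,d)=1,~ 1 \leq d \leq Q\right\}.
$$
With $S(t) = \sum_{n \leq x} a_n e^{2\pi i n t}$, the left-hand side of the statement is exactly $\sum_{t \in \mathcal{F}_Q} |S(t)|^2$, since $e^{\frac{2\pi i a n}{d}} = e^{2\pi i n (a/d)}$.

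First I check that distinct points of $\mathcal{F}_Q$ are distinct reals. Because $(a,d)=1$, each fraction is in lowest terms, so the pairs $(a,d)$ in the sum are in bijection with the points of $\mathcal{F}_Q$. Moreover $\mathcal{F}_Q \subset (0,1]$.

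Next I verify that $\mathcal{F}_Q$ is $\delta$-spaced with $\delta = 1/Q^2$, where the spacing is measured modulo $1$ because $S$ is $1$-periodic.
\begin{itemize}
\item For $\frac{a}{d} \neq \frac{a'}{d'}$, the integer $ad' - a'd$ is nonzero. Hence
$$
\left|\frac{a}{d} - \frac{a'}{d'}\right| ~=~ \frac{|ad'-a'd|}{dd'} ~\geq~ \frac{1}{dd'} ~\geq~ \frac{1}{Q^2}.
$$
\item The distance modulo $1$ also involves the wrap-around gap $1 - \frac{a}{d} + \frac{a'}{d'}$. The same estimate applies to it, because $\frac{a'}{d'}$ and $\frac{a}{d} - 1$ are again distinct rationals with denominators at most $Q$. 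Equivalently, one may view all points on $\R/\Z$, where $\frac{1}{1} \equiv 0$.
\end{itemize}

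Then I apply \thmref{thmLSdelta} with $\delta = 1/Q^2$, which gives
$$
\sum_{t \in \mathcal{F}_Q} |S(t)|^2 ~\leq~ (x + Q^2)\sum_{n\leq x}|a_n|^2,
$$
and this is the claim.

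The only technicality is the hypothesis $\delta \in (0,1)$ in \thmref{thmLSdelta}. This holds for $Q \geq 2$. In the degenerate case $Q<2$, the set $\mathcal{F}_Q$ consists of the single point $1$, so the left side is $|S(1)|^2$. By Cauchy--Schwarz,
$$
|S(1)|^2 ~\leq~ \lfloor x \rfloor \sum_{n\leq x}|a_n|^2 ~\leq~ (Q^2+x)\sum_{n\leq x}|a_n|^2.
$$

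I do not expect a genuine obstacle. The one step needing care is that the spacing must be checked modulo $1$, including the wrap-around pair near $0$ and $1$, because that is the sense in which \thmref{thmLSdelta} uses $\delta$-spacing.
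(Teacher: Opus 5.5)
Your proposal is correct and follows the paper's route: the paper also obtains this as an immediate consequence of Theorem~\ref{thmLSdelta}, by observing that the reduced fractions $a/d$ with $1 \leq d \leq Q$ are $1/Q^2$-spaced. Your extra checks are details the paper leaves implicit: the points are distinct, the wrap-around gap modulo $1$ is handled, and the case $Q<2$, where $\delta=1/Q^2\notin(0,1)$, is treated separately.
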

 From \cite[Theorem 7, Section 4]{Bo}, we have the following result.
 \begin{thm}\label{thmLS}
 We have
 $$
 \sum_{d \leq Q}~~~  \sideset{}{^*}\sum_{\chi (\mod d)} 
 \left|
 \sum_{n \leq x} a_n \chi(n)
 \right|^2
 ~\leq~
 (Q^2 + x) \sum_{n \leq x} |a_n|^2,
 $$
 where $\sideset{}{^*}\sum_{\chi (\mod d)} $ means that the sum is over 
 primitive Dirichlet characters $\chi~(\mod d)$.
 \end{thm}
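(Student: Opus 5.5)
The plan is to deduce \thmref{thmLS} from the additive version, \thmref{thmLS2}, by writing each primitive character as a linear combination of additive characters through Gauss sums. Put $S(t) = \sum_{n \leq x} a_n e^{2\pi i n t}$ as before. For a Dirichlet character $\chi \,(\mod d)$, let
$$
\tau(\chi) ~=~ \sum_{a=1}^{d} \chi(a) e^{\frac{2\pi i a}{d}}
$$
be its Gauss sum. I will use two standard facts for \emph{primitive} $\chi$. First, $|\tau(\chi)|^2 = d$. Second, the identity
$$
\chi(n)\, \tau(\overline{\chi}) ~=~ \sum_{a=1}^{d} \overline{\chi}(a) e^{\frac{2\pi i a n}{d}}
$$
holds for \emph{every} integer $n$.

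The second fact is the main point needing care, and the only place primitivity enters. For $(n,d)=1$ it is a change of variables $a \mapsto a\bar n$. For $(n,d)>1$ the left side is $0$, and one must show the right side also vanishes. The standard argument writes $d/(n,d) = d_1 < d$; since $\chi$ is primitive, some $b \equiv 1 \,(\mod d_1)$ with $(b,d)=1$ has $\chi(b) \neq 1$. The substitution $a \mapsto ab$ leaves $e^{2\pi i a n/d}$ unchanged and multiplies the sum by $\overline{\chi}(b)$, so the sum is $0$.

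Granting this, multiply by $a_n$ and sum over $n \leq x$. For primitive $\chi \,(\mod d)$ this gives
$$
\sum_{n \leq x} a_n \chi(n) ~=~ \frac{1}{\tau(\overline{\chi})} \sum_{\substack{a=1 \\ (a,d)=1}}^{d} \overline{\chi}(a)\, S\!\left(\tfrac{a}{d}\right),
$$
where the terms with $(a,d)>1$ drop out because $\overline{\chi}(a)=0$. Using $|\tau(\overline{\chi})|^2 = d$, I then enlarge the sum over primitive characters to the sum over all characters mod $d$; this is harmless since every term is nonnegative. Expanding the square and applying orthogonality of characters mod $d$ gives
$$
\sideset{}{^*}\sum_{\chi (\mod d)} \left| \sum_{n \leq x} a_n \chi(n) \right|^2
~\leq~ \frac{1}{d} \sum_{\chi (\mod d)} \left| \sum_{\substack{a=1 \\ (a,d)=1}}^{d} \overline{\chi}(a) S\!\left(\tfrac{a}{d}\right) \right|^2
~=~ \frac{\phi(d)}{d} \sum_{\substack{a=1 \\ (a,d)=1}}^{d} \left|S\!\left(\tfrac{a}{d}\right)\right|^2 .
$$

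Since $\phi(d)/d \leq 1$, summing over $d \leq Q$ and applying \thmref{thmLS2} gives the bound $(Q^2+x)\sum_{n\leq x}|a_n|^2$. This is exactly the claimed inequality. The vanishing of the Gauss-sum identity when $(n,d)>1$ is the one step needing an actual argument; the rest is bookkeeping.
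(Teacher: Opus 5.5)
Your argument is correct: it is the standard Gauss-sum reduction of the multiplicative large sieve to the additive inequality of \thmref{thmLS2}. Primitivity enters exactly where you use it, namely the identity $\chi(n)\tau(\overline{\chi})=\sum_{a}\overline{\chi}(a)e^{2\pi i an/d}$ for $(n,d)>1$ and the fact $|\tau(\overline{\chi})|^2=d$. The paper gives no proof of its own for this statement and simply quotes it from Bombieri \cite[Theorem 7, Section 4]{Bo}, where it is derived along essentially this same route.
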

 The inequality in \thmref{thmLS} is famously known as `{\em a  large sieve inequality}'. 
 This inequality is related to Dirichlet characters $\chi ~(\mod d)$ which are one-dimensional, 
 hence it  may be regarded as `{\em an abelian large sieve inequality}'. 
 In the forthcoming sections, we will formulate `{\em a non-abelian large sieve inequality}' 
 which is related to non-abelian characters of certain Galois extensions. 
 We will also discuss some of its applications.

 The following duality principle is one of the main tools in proving a large sieve inequality.
 The interested reader can find a proof in \cite{co-mu}.
 
 \begin{thm}\label{thmDual}
 Let $(c_{ij})$ be complex numbers for $1 \leq i \leq n$, $1 \leq j \leq m$ 
 and $\Delta$ be a positive real number. Then we have
 $$
\sum_{1 \leq j \leq m} 
\left| \sum_{1 \leq i \leq n} c_{ij} a_i \right|^2
~\leq~
\Delta \sum_{1 \leq i \leq n} |a_i|^2
$$
for any sequence $(a_i)_{i=1}^{n}$ of complex numbers if and only if
$$
\sum_{1 \leq i \leq n}
\left| \sum_{1 \leq j \leq m}   c_{ij} b_j \right|^2
~\leq~
\Delta \sum_{1 \leq j \leq m} |b_j|^2
$$
for any sequence $(b_j)_{j = 1}^{m}$ of complex numbers.
 \end{thm}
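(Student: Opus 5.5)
The duality principle is the statement that a linear map and its transpose have the same operator norm. I would prove one implication directly with the Cauchy--Schwarz inequality and a well-chosen test vector; the converse then follows by symmetry. No earlier result of the paper is needed.

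Assume the first inequality holds for every sequence $(a_i)$, and fix an arbitrary sequence $(b_j)_{j=1}^m$. Put
$$
y_i ~=~ \sum_{1 \leq j \leq m} c_{ij} b_j, \qquad S ~=~ \sum_{1 \leq i \leq n} |y_i|^2 ,
$$
so that $S$ is the left-hand side of the second inequality. If $S=0$ there is nothing to prove, so assume $S>0$. Expanding $|y_i|^2 = y_i\overline{y_i}$ and interchanging the two finite sums gives
$$
S ~=~ \sum_{1 \leq j \leq m} b_j \sum_{1 \leq i \leq n} c_{ij}\, \overline{y_i}.
$$
The key choice is the test vector $a_i = \overline{y_i}$. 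With it, the inner sum is exactly $\sum_i c_{ij} a_i$, and $\sum_i |a_i|^2 = S$.

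Now apply Cauchy--Schwarz in $j$ and then the assumed first inequality:
$$
S ~\leq~ \Big(\sum_{j} |b_j|^2\Big)^{1/2} \Big(\sum_{j} \Big|\sum_i c_{ij} a_i\Big|^2\Big)^{1/2}
~\leq~ \Big(\sum_{j} |b_j|^2\Big)^{1/2} \big(\Delta S\big)^{1/2}.
$$
Dividing by $S^{1/2}>0$ and squaring gives $S \leq \Delta \sum_j |b_j|^2$, which is the second inequality.

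For the converse, observe that the two inequalities are interchanged by replacing the matrix $(c_{ij})$ with its transpose and swapping the roles of $n$ and $m$. The same argument, with test vector $b_j = \overline{\sum_i c_{ij} a_i}$, therefore gives the reverse implication.

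I do not expect a real obstacle. The only steps needing care are spotting the conjugate test vector, which turns the quadratic quantity $S$ into a bilinear pairing, and separating out the degenerate case $S=0$ before dividing by $S^{1/2}$.
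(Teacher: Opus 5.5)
Your proof is correct and complete: the test vector $a_i=\overline{y_i}$ together with Cauchy--Schwarz gives one direction, and the converse follows by transposing $(c_{ij})$, which is legitimate because neither inequality conjugates the $c_{ij}$. The paper does not prove this statement itself but refers to \cite{co-mu} for it, and your argument is the standard one, equivalent to saying that a matrix and its transpose have the same operator norm.
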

 
 \medspace

\section{Non-abelian large sieve inequality and Artin's primitive root conjecture}

In this section, we will establish a connection between the non-abelian large sieve inequality 
and Artin's primitive root conjecture, and we will show that 
the non-abelian large sieve inequality implies Artin's primitive root conjecture.
As mentioned in the introduction, we consider the following sum
$$
\sum_{q \leq Q} \sum_{\varphi \in \widehat{\G}_q \atop \varphi \neq I} 
\left| \sideset{}{'}\sum_{p \leq x} \varphi(\sigma_p) \right|^2,
$$
where  $q$ varies over odd primes $\leq Q$ and $\sideset{}{'}\sum_{p \leq x}$ means that the sum is over primes 
$p \leq x$ satisfying $i_p \leq \sqrt{x}\log x$.

In this context, we prove the following lemma.
\begin{lem}\label{lemlsieveexp}
Suppose that $Q \leq x^{1-\epsilon}$ for some $\epsilon > 0$. Then we have
$$
\sum_{q \leq Q} \sum_{\varphi \in \widehat{\G}_q \atop \varphi \neq I} 
\left| \sideset{}{'}\sum_{p \leq x} \varphi(\sigma_p) \right|^2 
~\ll~ 
(Q^2 + x) \pi(x) ~+~
\sum_{q \leq Q} \sum_{c, d \in \F_q^{{^*}} } 
\left|\sideset{}{'}\sum_{p \leq x}
e\(\frac{pd - i_p c}{q}\) \right|^2.
$$
\end{lem}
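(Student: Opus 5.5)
The plan is to split the inner sum over $\varphi \neq I$ into the $q-2$ nontrivial one-dimensional characters $\varphi_i$ and the single $(q-1)$-dimensional character $\varphi_q$, and to bound each part separately.

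\emph{One-dimensional characters.} Here $\varphi_i(\sigma_p)=\chi_i(p)$ for nontrivial Dirichlet characters $\chi_i \pmod q$. Since $q$ is prime, every nontrivial $\chi \pmod q$ is primitive. Apply \thmref{thmLS} with $a_n = 1$ if $n=p$ is a prime counted in $\sideset{}{'}\sum$, and $a_n=0$ otherwise. Then $\sum_n |a_n|^2 \le \pi(x)$, so this part contributes $\le (Q^2+x)\pi(x)$.

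\emph{The character $\varphi_q$.} Use \eqref{eqphisigmap} and separate the $d=0$ term from the $d\in\F_q^*$ terms:
$$
\varphi_q(\sigma_p) ~=~ \frac{1}{q}\sum_{c\in\F_q^*} e\(\frac{-i_pc}{q}\) ~+~ \frac{1}{q}\sum_{c,d\in\F_q^*} e\(\frac{(p-1)d-i_pc}{q}\).
$$

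For the $d=0$ term, the inner sum equals $q-1$ if $q\mid i_p$ and $-1$ otherwise. So this term is $O(1)$ when $q\nmid i_p$ and at most $1$ when $q\mid i_p$; in either case it is bounded by $1$ in absolute value. Summing over $p$ gives at most $\pi(x)$, and hence at most $Q\pi(x)^2$ after squaring and summing over $q$. That is too large, so this crude bound must be refined. A cleaner route is to write $\frac1q\sum_{c\in\F_q^*}e(-i_pc/q) = \mathbf{1}_{q\mid i_p} - \frac1q$. Then $\sideset{}{'}\sum_p$ of this term is $\#\{p: q\mid i_p\}' - \pi'(x)/q$.

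Now combine it with the $d\neq0$ part. Summing over all $d\in\F_q$ gives $\varphi_q(\sigma_p) = q\,\mathbf{1}_{q\mid i_p}\mathbf{1}_{p\equiv1} - \mathbf{1}_{p\equiv 1(q)}$, which is the correct character value. So the real issue is the size of $\sideset{}{'}\sum_p \varphi_q(\sigma_p)$, rather than the individual pieces.

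Rather than splitting on $d=0$, I would split on $c$: the $c=0$ terms are excluded by definition, so only $c\neq 0$ occurs. Separate $d=0$ versus $d\ne0$ inside, and treat the $d=0$ piece as $\frac1q\sum_{c\ne0}\sideset{}{'}\sum_p e(-i_pc/q)$. By Cauchy--Schwarz, its square is at most $\frac{1}{q}\sum_{c\neq0}\bigl|\sideset{}{'}\sum_p e(-i_pc/q)\bigr|^2$. Since $i_p\le \sqrt{x}\log x$, \thmref{thmLS1} applied to the sequence $b_m=\#\{p: i_p=m\}'$ gives
$$
\sum_{c}\bigl|\textstyle\sum_m b_m e(mc/q)\bigr|^2 \le (q+\sqrt{x}\log x)\sum_m b_m^2 .
$$
The hard part is controlling $\sum_m b_m^2$. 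I expect the needed bound to be $\sum_m b_m^2 \ll \pi(x)\cdot x^{1/2}$, up to logarithmic factors, using that $p\mid 2^{(p-1)/m}-1$ forces few primes with a given index. After the $1/q$ factor and summing over $q\le Q$, this should fit under $x\pi(x)$ provided $Q\le x^{1-\epsilon}$. This is where the hypothesis on $Q$ enters, and it is the step I expect to be the main obstacle.

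For the $d\neq0$ piece, Cauchy--Schwarz over the $(q-1)^2$ pairs $(c,d)$ gives
$$
\frac1{q^2}\Bigl|\sum_{c,d}\cdots\Bigr|^2 \le \sum_{c,d\in\F_q^*}\Bigl|\sideset{}{'}\sum_p e\(\frac{pd-i_pc}{q}\)\Bigr|^2,
$$
after absorbing the unimodular factor $e(-d/q)$. This is exactly the second term on the right-hand side of the lemma. Finally, $|A+B|^2\le 2|A|^2+2|B|^2$ combines the $d=0$ and $d\neq0$ pieces, and adding the one-dimensional contribution completes the bound.
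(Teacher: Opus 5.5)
Your treatment of the one-dimensional characters (large sieve with $\sum|a_n|^2\le\pi(x)$) and of the $d\neq0$ part of $\varphi_q(\sigma_p)$ (Cauchy--Schwarz over the $(q-1)^2$ pairs, absorbing $e(-d/q)$) is correct, and it is what the paper does. The gap is in the $d=0$ part.

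\textbf{Why your route for $d=0$ fails.} You apply Cauchy--Schwarz over $c$ and then \thmref{thmLS1} to $b_m=\#\{p\le x: i_p=m\}'$, which needs $\sum_m b_m^2\ll\pi(x)x^{1/2}$. Given the factor $\sum_{q\le Q}q^{-1}\sqrt{x}\log x$, you would in fact need it with an extra saving of $\log x\log\log Q$. That bound is false.
\begin{itemize}
\item The indices are not spread evenly over $[1,\sqrt{x}\log x]$; they concentrate on small values. Your heuristic $p\mid 2^{(p-1)/m}-1$ only limits primes of \emph{large} index and says nothing for small $m$.
\item Under GRH, Hooley's theorem gives $b_1\gg\pi(x)$, so $\sum_m b_m^2\gg\pi(x)^2$.
\item Unconditionally, Fouvry's theorem on large prime factors of $p-1$ gives $i_p<x^{1/3}$ for a positive proportion of $p$. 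Hence $\sum_m b_m^2\gg\pi(x)^2x^{-1/3}$.
\end{itemize}

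No better estimate for $\sum_m b_m^2$ can rescue the argument, because the damage is done by the Cauchy--Schwarz step over $c$. We have $\sum_{c\neq0}e(-i_pc/q)=-1$ whenever $q\nmid i_p$, and that cancellation is thrown away. Concretely, let $B_r=\#\{p\le x: i_p\equiv r \ (\mod q)\}'$. By Parseval,
\[
\frac1q\sum_{c\neq0}\Bigl|\sideset{}{'}\sum_p e(-i_pc/q)\Bigr|^2=\sum_{r}B_r^2-\frac{\pi'(x)^2}{q}.
\]
Under GRH, $B_1\ge b_1\gg\pi(x)$ makes this $\gg\pi(x)^2$ for every large $q$. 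Summing over $q\le Q$ gives $\gg\pi(Q)\pi(x)^2$, which is not $O\bigl((Q^2+x)\pi(x)\bigr)$ in the range $(\log x)^2\le Q\le x^{1-\epsilon}$.

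\textbf{The fix.} You already wrote the $d=0$ piece as $\sideset{}{'}\sum_p\bigl(\mathbf 1_{q\mid i_p}-\tfrac1q\bigr)$; bound it directly.
\begin{itemize}
\item Since $q\mid i_p$ forces $p\equiv1 \ (\mod q)$, Brun--Titchmarsh gives $\#\{p\le x: q\mid i_p\}\ll_\epsilon\pi(x)/q$ for $q\le x^{1-\epsilon}$. This, not any bound on $\sum_m b_m^2$, is where the hypothesis on $Q$ enters.
\item So the $d=0$ piece is $O(\pi(x)/q)$.
\item Its square summed over $q$ is $\ll\pi(x)^2\sum_q q^{-2}\ll\pi(x)^2\le x\pi(x)$.
\end{itemize}
Combining this with your other two pieces via $|A+B|^2\le2|A|^2+2|B|^2$ proves the lemma. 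This is exactly the paper's argument.
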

\begin{proof}
By using the explicit description of irreducible characters of $\G_q$ 
(see Section \ref{secGalChar}), we get
\begin{eqnarray*}
\sum_{q \leq Q} \sum_{\varphi \in \widehat{\G}_q \atop \varphi \neq I} 
\left| \sideset{}{'}\sum_{p \leq x} \varphi(\sigma_p) \right|^2
~=~
\sum_{q \leq Q} \sum_{\chi (\mod q) \atop \chi \neq I}
\left| \sideset{}{'}\sum_{p \leq x} \chi(p) \right|^2
~+~
\sum_{q \leq Q} \left| \sideset{}{'}\sum_{p \leq x} \varphi_q(\sigma_p) \right|^2.
\end{eqnarray*}
By the classical large sieve inequality (see \thmref{thmLS}), we get
\begin{eqnarray*}
\sum_{q \leq Q} \sum_{\chi (\mod q) \atop \chi \neq I}
\left| \sideset{}{'}\sum_{p \leq x} \chi(p) \right|^2
~\ll~
\(Q^2 + x \) \pi(x).
\end{eqnarray*}
From \eqref{eqphisigmap}, we get
\begin{eqnarray}
\sum_{q \leq Q} \left| \sideset{}{'}\sum_{p \leq x} \varphi_q(\sigma_p) \right|^2 
&~=~& \sum_{q \leq Q}   \left| \sideset{}{'}\sum_{p \leq x} \frac{1}{q} 
\sum_{(c, d) \in \F_q^{^*} \times \F_q} 
e\(\frac{(p-1) d - i_p c}{q}\) \right|^2  \notag\\ \label{eqphiqd0}
&~\ll~& \sum_{q \leq Q} \frac{1}{q^2} \left| \sideset{}{'}\sum_{p \leq x}
\sum_{c, d \in \F_q^{^*} }
e\(\frac{(p-1) d - i_p c}{q}\) \right|^2
~+~ \sum_{q \leq Q} \frac{1}{q^2} \left| \sideset{}{'}\sum_{p \leq x} 
\sum_{c \in \F_q^{^*} } 
e\(\frac{ - i_p c}{q}\) \right|^2.
\end{eqnarray}
The latter sum in the above equation is
\begin{eqnarray*}
\notag
\sum_{q \leq Q} \frac{1}{q^2} \left| \sideset{}{'}\sum_{p \leq x} \sum_{c \in \F_q^{^*} } 
e\(\frac{ - i_p c}{q}\) \right|^2
&~=~& 
\sum_{q \leq Q} \frac{1}{q^2} \left| \sideset{}{'}\sum_{p \leq x \atop q \mid i_p} (q-1) 
~+~  \sideset{}{'}\sum_{p \leq x \atop q \nmid i_p} (-1) \right|^2 \\
~~&\ll&~
\sum_{q \leq Q} \frac{1}{q^2} 
\( \sum_{p \leq x \atop q \mid i_p} q \)^2 ~+~ \pi(x)^2.
\end{eqnarray*}
Note that $q \mid i_p$ implies that $p \equiv 1 ~(\mod q)$. 
By using the Brun-Titchmarsh inequality \cite[Theorem 3.8, p. 110]{HR}, we get
\begin{equation}\label{eqd0BT}
\sum_{p \leq x \atop q \mid i_p} 1 ~\leq~ \sum_{p \leq x \atop p \equiv 1 (\mod q)} 1 
~\ll~ \frac{\pi(x)}{q}.
\end{equation}
From \eqref{eqd0BT}, we get
\begin{equation}\label{eqexpd0}
\sum_{q \leq Q} \frac{1}{q^2} \left| \sideset{}{'}\sum_{p \leq x} \sum_{c \in \F_q^{^*} } 
e\(\frac{ - i_p c}{q}\) \right|^2
~\ll~ \pi(x)^2.
\end{equation}
By the Cauchy-Schwarz inequality, we deduce that
\begin{eqnarray}\label{eqexpCS}
 \sum_{q \leq Q} \frac{1}{q^2} \left| \sideset{}{'}\sum_{p \leq x}
\sum_{c \in \F_q^{^*} } \sum_{d \in \F_q^{^*}}
e\(\frac{(p-1) d - i_p c}{q}\) \right|^2
~\ll~
\sum_{q \leq Q} \sum_{c, d \in \F_q^{^*} } 
\left|\sideset{}{'}\sum_{p \leq x}
e\(\frac{pd - i_p c}{q}\) \right|^2.
\end{eqnarray}
This completes the proof of \lemref{lemlsieveexp}.
\end{proof}

\medspace

\subsection{Proof of \thmref{thmNLS-AC}}
Let $\delta > 0$ be a sufficiently small real number. 
From \cite{GM} (also see \cite[p. 66]{MR} and \cite[p. 3]{JM}), 
the cardinality of the following set of primes
\begin{eqnarray*}
S(x) ~&=&~ \left\{ p \leq x ~:~ \frac{p-1}{2} \text{ is prime or }  
\frac{p-1}{2} ~=~  q_1 q_2, ~~ q_i \text{ prime }, \right.\\
&~& \hspace{2cm} \left. x^{0.26} < q_1 < x^{\frac{1}{2} - \delta} < q_2  
\text{ and } \(\frac{2}{p}\) ~=~ -1 \right\}
\end{eqnarray*}
is
\begin{eqnarray}\label{eqSxp-1}
\#S(x) ~\gg~ \frac{x}{\log^2 x}.	
\end{eqnarray}
Let $p \in S(x)$. If $\frac{p-1}{2}$ is a prime, then it is easy to see that 
$2$ is a primitive root $(\mod p)$. Suppose that $p-1 = 2 q_1 q_2$ as above. 
Since $\(\frac{2}{p}\) ~=~ -1$, we see that the order of $2~(\mod p)$ is either 
$2q_1$, $2q_2$ or $2q_1q_2$. If the order of $2~(\mod p)$ is $2q_1$, 
then $p \mid (2^{2q_1} -1)$ and the number of such primes is bounded by
\begin{eqnarray}\label{eqomegq1}
\sum_{q_1 < x^{\frac{1}{2} - \delta}} \omega\(2^{2q_1} -1\)
~\ll~ 
\sum_{q_1 < x^{\frac{1}{2} - \delta}}  q_1
~\ll~ 
x^{1-2\delta}.	
\end{eqnarray}
Hence from \eqref{eqSxp-1} and \eqref{eqomegq1}, if
$$
S_1(x) ~=~ \{ p \in S(x) ~:~ \text{order of $2~(\mod p)$ is either $2q_2$ or $p-1$}\},
$$
then we have
\begin{equation}\label{eqordq2p-1}
\#S_1(x)
~\gg~\frac{x}{\log^2 x}.
\end{equation}
We also have $i_p \leq x^{\frac{1}{2}} \log x$ for $p \in S_1(x)$. 
Note that if the order of $2~(\mod p)$ is $2q_2$, 
then $p$ splits completely in $L_{q_1}$. Thus we have
\begin{equation}\label{eqordq2}
\#\{ p \in S_1(x) ~:~ \text{order of $2~(\mod p)$ is $2q_2$}\}
~\leq~
\sum_{x^{0.26} < q < x^{\frac{1}{2}-\delta}} \tilde{\pi}_1(x, L_{q}/\Q),
\end{equation}
where $\tilde{\pi}_1(x, L_{q}/\Q)$ denotes the number of primes $p \leq x$ 
which split completely in $L_q$ and $i_p \leq x^{\frac{1}{2}} \log x$.
We have the following expression for $\tilde{\pi}_1(x, L_{q}/\Q)$:
$$
\tilde{\pi}_1(x, L_q/\Q) ~=~ \frac{1}{|\G_q|} \sum_{\varphi} \varphi(1) \tilde{\pi}(x,\varphi),
$$
where $\varphi$ varies over irreducible characters of $\G_q$ and
$$
\tilde{\pi}(x, \varphi) ~=~ \sideset{}{'}\sum_{p \leq x} \varphi(\sigma_p).
$$
Here $\sigma_p$ denotes a Frobenius element of $p$ in $\G_q$. We have
$$
\tilde{\pi}_1(x, L_q/\Q) 
~=~ \frac{1}{|\G_q|} \sum_{\varphi} \varphi(1) \tilde{\pi}(x,\varphi)
~=~ \frac{1}{q(q-1)}  \sum_{\varphi \neq 1} \varphi(1) \tilde{\pi}(x,\varphi)
~+~ O\(\frac{\pi(x)}{q(q-1)} \).
$$
Note that
$$
\sum_{x^{0.26} < q < x^{\frac{1}{2}-\delta}} \frac{\pi(x)}{q(q-1)} 
~\ll~ x^{0.74}.
$$
By the Cauchy-Schwarz inequality, we have 
\begin{eqnarray*}
\sum_{x^{0.26} < q < x^{\frac{1}{2}-\delta}}
\frac{1}{q(q-1)} \sum_{\varphi \neq 1} \varphi(1) \tilde{\pi}(x,\varphi)
~&=&~ 
\sum_{x^{0.26} < q < x^{\frac{1}{2}-\delta}}
\sum_{\varphi \neq 1} \frac{\varphi(1)}{\sqrt{q(q-1)}} \frac{\tilde{\pi}(x,\varphi)}{\sqrt{q(q-1)}} \\
~&\leq&~ 
\( \sum_{x^{0.26} < q < x^{\frac{1}{2}-\delta}}\sum_{\varphi \neq 1} 
\frac{\varphi(1)^2}{q(q-1)}\)^{\frac{1}{2}} 
\(\sum_{x^{0.26} < q < x^{\frac{1}{2}-\delta}}\sum_{\varphi \neq 1}  
\frac{|\tilde{\pi}(x,\varphi)|^2}{q(q-1)}\)^{\frac{1}{2}}.
\end{eqnarray*}
Since 
$\sum_{\varphi} \varphi(1)^2 ~=~ q (q-1)$,
we get
$$
\sum_{x^{0.26} < q < x^{\frac{1}{2}-\delta}}\sum_{\varphi \neq 1} \frac{\varphi(1)^2}{q(q-1)} 
~\ll~ \sum_{x^{0.26} < q < x^{\frac{1}{2}-\delta}} 1
~\ll~ x^{\frac{1}{2}-\delta}.
$$
We have
$$
\sum_{x^{0.26} < q < x^{\frac{1}{2}-\delta}}\sum_{\varphi \neq 1}  
\frac{|\tilde{\pi}(x,\varphi)|^2}{q(q-1)}
~\ll~ 
\sum_{x^{0.26} < q < x^{\frac{1}{2}-\delta}} ~~
\sideset{}{^*}\sum_{\chi (\mod q)}  \frac{|\tilde{\pi}(x,\chi)|^2}{q^2} 
~+~ \sum_{x^{0.26} < q < x^{\frac{1}{2}-\delta}} 
\frac{|\tilde{\pi}(x, \varphi_q)|^2}{q^2},
$$
where 
$$
\tilde{\pi}(x, \chi) ~=~ \sideset{}{'}\sum_{p \leq x} \chi(p).
$$
By the classical large sieve inequality and partial summation, we deduce that
$$
\sum_{x^{0.26} < q < x^{\frac{1}{2}-\delta}} \frac{1}{q^2}
\sideset{}{^*}\sum_{\chi (\mod q)} 
\left| \sideset{}{'}\sum_{p \leq x} \chi(p)\right|^2
~\ll~ x^{\frac{1}{2}-\delta} \pi(x).
$$
For any $t \leq x^{\frac{1}{2}-\delta}$, let
$$
A(t) ~=~ \sum_{q \leq t} |\tilde{\pi}(x,\varphi_q)|^2.
$$
From \eqref{eqphiqd0}, \eqref{eqexpd0}, \eqref{eqexpCS} 
and our assumption in \thmref{thmNLS-AC}, we get
$$
A(t) ~\ll~ \pi(x)^2 ~+~ \sum_{q \leq t} \sum_{c, d \in \F_q^{^*}} 
\left| \sideset{}{'}\sum_{p \leq x}  e\(\frac{pd -i_p c}{q}\) \right|^2
~\ll~ (t^3+ x) \pi(x).
$$
Set $D = x^{0.26}, Q = x^{\frac{1}{2}-\delta}$. 
By partial summation, we get
\begin{eqnarray*}
\sum_{x^{0.26} < q \leq x^{\frac{1}{2}-\delta}}  
\frac{|\tilde{\pi}(x, \varphi_q)|^2}{q^2}
&~=~&
\frac{A(Q)}{Q^2} ~-~ \frac{A(D)}{D^2} ~+~ \int_{D}^{Q} \frac{2A(t)}{t^3} dt 
~\leq ~
\frac{A(Q)}{Q^2} ~+~ \int_{D}^{Q} \frac{2A(t)}{t^3} dt 
~\ll ~
x^{\frac{1}{2}-\delta} \pi(x).
\end{eqnarray*}
Thus we have
$$
\sum_{x^{0.26} < q < x^{\frac{1}{2}-\delta}}\frac{1}{q(q-1)} 
\sum_{\varphi \neq 1} \varphi(1) \tilde{\pi}(x,\varphi)
~\ll~ 
\(x^{\frac{1}{2}-\delta}\)^{\frac{1}{2}} 
\(x^{\frac{1}{2}-\delta} \pi(x)\)^{\frac{1}{2}}
~\ll~ 
x^{1- \delta}.
$$
Hence we conclude that
\begin{eqnarray}\label{eqpi1}
\sum_{x^{0.26} < q < x^{\frac{1}{2}-\delta}} \tilde{\pi}_1(x, L_{q}/\Q) 
~\ll~ x^{1- \delta}.	
\end{eqnarray}
From \eqref{eqordq2p-1}, \eqref{eqordq2} and \eqref{eqpi1}, we deduce that
$$
\#\{p \leq x ~:~  2 \text{ is a primitive root } (\mod p)\} 
~\gg~ \frac{x}{\log^2 x}.
$$
This completes the proof of \thmref{thmNLS-AC}. \qed

\medspace

\medspace

\section{Non-abelian large sieve inequality}
In this section, we will study non-abelian large sieve inequality 
with arbitrary coefficients and derive some unconditional results.  
\subsection{Proof of \thmref{thmN-abLS}}
We have
\begin{eqnarray}\label{eqsepcd}
\nonumber 
\sum_{q \leq Q} \sum_{c, d \in \F_q^{^*}} 
\left| \sum_{n \leq x} a_n ~e\(\frac{nd + f(n)c}{q}\) \right|^2
~&=&~ 
\sum_{q \leq Q} \sum_{c, d \in \F_q^{^*}} 
\left| \sum_{m \leq C x^\theta} \sum_{n \leq x \atop f(n) = m} 
a_n ~e\(\frac{nd + mc}{q}\) \right|^2 \\
~&=&~ 
\sum_{q \leq Q} \sum_{c , d\in \F_q^{^*}} 
\left| \sum_{m \leq C x^\theta}  b_m ~e\(\frac{mc}{q}\) \right|^2,
\end{eqnarray}
where
$$
b_m ~=~ \sum_{n \leq x \atop f(n) = m} a_n ~e\(\frac{nd}{q}\).
$$
By \thmref{thmLS1}, we have
\begin{equation}
\sum_{c \in \F_q^{^*}}
\left| \sum_{m \leq C x^\theta}  b_m ~e\(\frac{mc}{q}\) \right|^2
~\ll~
(q + x^\theta ) \sum_{m \leq C x^\theta}  |b_m|^2.
\end{equation}
Thus we get
\begin{eqnarray}\label{eqqcdbm}
\nonumber
\sum_{q \leq Q} \sum_{c, d \in \F_q^{^*}} 
\left| \sum_{m \leq C x^\theta}  b_m ~e\(\frac{mc}{q}\) \right|^2
~&\ll&~
\sum_{q \leq Q} \sum_{d \in \F_q^{^*}}
(q + x^\theta ) \sum_{m \leq C x^\theta}  |b_m|^2 \\
~&\ll&~ 
(Q + x^\theta ) \sum_{m \leq C x^\theta}
\sum_{q \leq Q} \sum_{d \in \F_q^{^*}}
\left| \sum_{n \leq x \atop f(n) = m} a_n ~e\(\frac{nd}{q}\) \right|^2.
\end{eqnarray}
By \thmref{thmLS2}, we get
\begin{equation}\label{eqqcLS}
\sum_{q \leq Q} \sum_{d \in \F_q^{^*}}
\left| \sum_{n \leq x \atop f(n) = m} a_n ~e\(\frac{nd}{q}\) \right|^2
~\ll~
(Q^2+x) \sum_{n \leq x \atop f(n) = m} |a_n|^2.
\end{equation}
By \eqref{eqqcdbm} and \eqref{eqqcLS}, we get
\begin{eqnarray*}
\sum_{q \leq Q} \sum_{c , d \in \F_q^{^*}} 
\left| \sum_{m \leq C x^\theta}  b_m ~e\(\frac{mc}{q}\) \right|^2
~&\ll&~
(Q + x^\theta) \sum_{m \leq C x^\theta}
(Q^2+x) \sum_{n \leq N \atop f(n) = m} |a_n|^2 \\
~&\ll&~
(Q + x^\theta) (Q^2+x)
\sum_{m \leq C x^\theta} \sum_{n \leq x \atop f(n) = m} |a_n|^2 \\
~&\ll&~
(Q + x^\theta) (Q^2+x)\sum_{n \leq x} |a_n|^2.
\end{eqnarray*}
This completes the proof of \thmref{thmN-abLS}.  \qed

\medspace

\subsection{Proof of \thmref{thmNLSunc}}
Let $A(x)$ be the set of primes $p \leq x$ satisfying 
$i_p \leq x^{\frac{1}{2}} \log x$ 
and $(a_n)_n$ be the indicator function of $A(x)$. 
Also let $f$ be an arithmetic function defined by 
$f(n) = i_n$ if $n \in A(x)$ and $f(n)=1$ otherwise. 
Then we have
$$
1 ~\leq~ f(n) ~\leq~  x^{\frac{1}{2}} \log x
$$
for all $n \leq x$. Now \thmref{thmNLSunc} follows by arguing as 
in the proof of \thmref{thmN-abLS}. \qed

\medspace

In the next subsection, we will apply the duality principle 
to derive some improvements for the non-abelian large sieve inequality.

\medspace

\subsection{A duality principle and  large sieve inequality}
We prove the following variation of the duality principle.
\begin{thm}\label{thmDualVar}
Let $(c_{ij})$ be complex numbers for $1 \leq i \leq n$, $1 \leq j \leq m$ and 
$g: \N \times \N \to \{0, 1\}$ be any function. 
Also let $\Delta$ be a positive real number. 
Then we have
$$
\sum_{1 \leq j \leq m} 
\left| \sum_{1 \leq i \leq n  \atop g(i, j) = 1} c_{ij} a_i \right|^2
~\leq~
\Delta \sum_{1 \leq i \leq n} |a_i|^2
$$
for any sequence $(a_i)_{i=1}^{n}$ of complex numbers if and only if
$$
\sum_{1 \leq i \leq n}
\left| \sum_{1 \leq j \leq m \atop g(i, j) = 1}   c_{ij} b_j \right|^2
~\leq~
\Delta \sum_{1 \leq j \leq m} |b_j|^2
$$
for any sequence $(b_j)_{j=1}^{m}$ of complex numbers.
\end{thm}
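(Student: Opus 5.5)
This variation is just \thmref{thmDual} applied to the truncated matrix. I would define new coefficients $c'_{ij} = c_{ij}\, g(i,j)$ for $1 \leq i \leq n$, $1 \leq j \leq m$. Since $g$ takes only the values $0$ and $1$, we have for every $j$
$$
\sum_{1 \leq i \leq n \atop g(i,j)=1} c_{ij} a_i ~=~ \sum_{1 \leq i \leq n} c'_{ij} a_i,
$$
and similarly, for every $i$,
$$
\sum_{1 \leq j \leq m \atop g(i,j)=1} c_{ij} b_j ~=~ \sum_{1 \leq j \leq m} c'_{ij} b_j.
$$
Under this substitution, the two inequalities in the statement become exactly the two inequalities of \thmref{thmDual} for the matrix $(c'_{ij})$ with the same constant $\Delta$. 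Their equivalence then follows at once from \thmref{thmDual}.

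If one prefers not to rely on the cited reference, I would include the standard self-contained argument, run directly for the matrix $(c'_{ij})$. I would prove only the forward direction, since the converse is symmetric under exchanging the roles of $i$ and $j$.

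Given $(b_j)$, set $a_i = \overline{\sum_j c'_{ij} b_j}$. Then
$$
\sum_i |a_i|^2 ~=~ \sum_i \Bigl(\sum_j c'_{ij} b_j\Bigr)\overline{\Bigl(\sum_j c'_{ij} b_j\Bigr)} ~=~ \sum_j b_j \sum_i c'_{ij} a_i,
$$
because $\overline{\sum_j c'_{ij} b_j} = a_i$. By Cauchy--Schwarz and then the hypothesis, the right-hand side is
$$
\leq~ \Bigl(\sum_j |b_j|^2\Bigr)^{1/2}\Bigl(\sum_j\Bigl|\sum_i c'_{ij} a_i\Bigr|^2\Bigr)^{1/2} ~\leq~ \Bigl(\sum_j |b_j|^2\Bigr)^{1/2}\Bigl(\Delta\sum_i |a_i|^2\Bigr)^{1/2}.
$$
If $\sum_i |a_i|^2 = 0$, the desired inequality is trivial. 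Otherwise I would divide by $\bigl(\sum_i |a_i|^2\bigr)^{1/2}$ and square, which gives $\sum_i |a_i|^2 \leq \Delta \sum_j |b_j|^2$. This is exactly the second inequality in the statement.

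I expect no real obstacle. The only point needing care is that the restriction $g(i,j)=1$ depends on both indices. For that reason it must be absorbed into the matrix entries, not into the sequences $(a_i)$ or $(b_j)$, and the substitution $c'_{ij} = c_{ij}\,g(i,j)$ does exactly that.
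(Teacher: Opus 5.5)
Your proposal is correct and takes essentially the paper's approach: the paper also sets $\tilde{c}_{ij}=c_{ij}$ when $g(i,j)=1$ and $\tilde{c}_{ij}=0$ otherwise, then applies the duality principle (Theorem~\ref{thmDual}) to that matrix. Your self-contained Cauchy--Schwarz proof of the duality step is correct, but the paper does not need it since it cites the result.
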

\begin{proof}
\thmref{thmDualVar} follows from \thmref{thmDual} by setting
$$
\tilde{c}_{ij} ~=~ 
\begin{cases}
c_{ij} & \text{if } g(i, j) = 1, \\
0 & \text{if } g(i, j) = 0.
\end{cases}
$$
\end{proof}

\begin{rmk}
Before moving further,  we will make the following observation.
The first sum in \eqref{eqphiqd0} is
\begin{eqnarray*}
\sum_{q \leq Q} \frac{1}{q^2} \left| \sideset{}{'}\sum_{p \leq x}
\sum_{c, d \in \F_q^{^*} } 
e\(\frac{(p-1) d - i_p c}{q}\) \right|^2
&~=~&
\sum_{q \leq Q} \frac{1}{q^2} 
\left|
\sideset{}{'}\sum_{p \leq x \atop q \mid i_p} \sum_{c, d \in \F_q^{^*}} 1 
~+~ \sideset{}{'}\sum_{p \leq x  \atop q \nmid i_p} 
\sum_{c, d \in \F_q^{^*}} e\(\frac{(p-1) d - i_p c}{q}\)
\right|^2.
\end{eqnarray*}
From \eqref{eqd0BT}, we get
\begin{eqnarray*}
\sum_{q \leq Q} \frac{1}{q^2} 
\left|
\sideset{}{'}\sum_{p \leq x \atop q \mid i_p} \sum_{c, d \in \F_q^{^*}} 1 
\right| ^2
~\ll~ 
\sum_{q \leq Q} \frac{1}{q^2} 
\left|
\sideset{}{'}\sum_{p \leq x \atop p \equiv 1 (\mod q)} q^2
\right|^2
~\ll~ Q \pi(x)^2.
\end{eqnarray*}
Hence we get
\begin{eqnarray*}\label{eqqnmidip}
\sum_{q \leq Q} \frac{1}{q^2} \left| \sideset{}{'}\sum_{p \leq x}
\sum_{c, d \in \F_q^{^*} } 
e\(\frac{(p-1) d - i_p c}{q}\) \right|^2
&~\ll~&
Q\pi(x)^2 ~+~ \sum_{q \leq Q} \frac{1}{q^2} 
\left|
\sideset{}{'}\sum_{p \leq x  \atop q \nmid i_p} 
\sum_{c, d \in \F_q^{^*}} e\(\frac{(p-1) d - i_p c}{q}\)
\right|^2 \\
&~\ll~&
Q\pi(x)^2 ~+~ \sum_{q \leq Q} \frac{1}{q^2} 
\left|
\sum_{c, d \in \F_q^{^*}}  e \(- \frac{d}{q}\) 
\sideset{}{'}\sum_{p \leq x  \atop q \nmid i_p} e\(\frac{ pd - i_p c}{q}\)
\right|^2.
\end{eqnarray*}
By applying the Cauchy-Schwarz inequality, we get
\begin{eqnarray*}
\sum_{q \leq Q} \frac{1}{q^2} 
\left|
\sum_{c, d \in \F_q^{^*}}  e \(- \frac{d}{q}\) 
\sideset{}{'}\sum_{p \leq x  \atop q \nmid i_p} e\(\frac{ pd - i_p c}{q}\)
\right|^2 
&~\ll~& 
\sum_{q \leq Q} \frac{1}{q^2} \cdot (q-1)^2 
\sum_{c, d \in \F_q^{^*}} 
\left|
\sideset{}{'}\sum_{p \leq x  \atop q \nmid i_p} e\(\frac{ pd - i_p c}{q}\)
\right|^2 \\
&~\ll~& 
\sum_{q \leq Q} \sum_{c, d \in \F_q^{^*}} 
\left|
\sideset{}{'}\sum_{p \leq x  \atop (q, i_p)=1} e\(\frac{ pd + i_p c}{q}\)
\right|^2.
\end{eqnarray*}
Thus we get
\begin{eqnarray}\label{eqphiqnls}
\sum_{q \leq Q} \left| \sideset{}{'}\sum_{p \leq x} \varphi_q(\sigma_p) \right|^2 
&~\ll~&
Q \pi(x)^2 ~+~ \sum_{q \leq Q} \sum_{c, d \in \F_q^{^*}} 
\left|
\sideset{}{'}\sum_{p \leq x  \atop (q, i_p)=1} e\(\frac{ pd + i_p c}{q}\)
\right|^2.
\end{eqnarray}
\end{rmk}

\subsection{Proof of \thmref{thmNLS}}
By using a variation of the duality principle (see \thmref{thmDualVar}), 
we need to estimate 
\begin{eqnarray*}
\sum_{n \leq x} \left|
\sum_{q \leq Q \atop (q, f(n))=1} 
\sum_{c, d \in \F_q^{^*}} A(q,c,d) e\( \frac{nd+f(n)c}{q} \)
\right|^2
&~=~&
\sum_{m \leq Cx^{\theta}} \sum_{n \leq x \atop f(n)=m} \left|
\sum_{q \leq Q \atop (q, m)=1} 
\sum_{c, d \in \F_q^{^*}} A(q,c,d)~e\( \frac{nd+mc}{q} \)
\right|^2 
\notag \\ 
&~\leq~&
\sum_{m \leq Cx^{\theta}} \sum_{n \leq x \atop n \equiv 1 (\mod m)} \left|
\sum_{q \leq Q \atop (q, m)=1} 
\sum_{c, d \in \F_q^{^*}} A(q,c,d)~ e\( \frac{nd+mc}{q} \)
\right|^2.
\end{eqnarray*}
The sum on the right hand side of the above equation is 
\begin{eqnarray}
&~& \sum_{m \leq Cx^{\theta}} \sum_{n \leq x \atop n \equiv 1 (\mod m)} 
\left|
\sum_{q \leq Q \atop (q, m)=1} 
\sum_{c, d \in \F_q^{^*}} A(q,c,d) ~e\( \frac{nd+mc}{q} \)
\right|^2 \notag \\
&~=~&
 \sum_{m \leq Cx^{\theta}} \sum_{n \leq x \atop n \equiv 1 (\mod m)}  
 \sum_{q_1, q_2 \leq Q \atop (m, q_1q_2)=1} 
 \sum_{c_1, d_1 \in \F_{q_1}^{^*} \atop c_2, d_2 \in \F_{q_2}^{^*}} 
 A(q_1, c_1, d_1) \overline{A(q_2, c_2, d_2)} \cdot 
 e\( n\(\frac{d_1}{q_1} - \frac{d_2}{q_2}\) + m\(\frac{c_1}{q_1} - \frac{c_2}{q_2}\)\) \notag \\
 &~=~& 
\sum_{q_1, q_2 \leq Q} \sum_{c_1, d_1 \in \F_{q_1}^{^*} \atop c_2, d_2 \in \F_{q_2}^{^*}} 
A(q_1, c_1, d_1) \overline{A(q_2, c_2, d_2)} 
\sum_{m \leq Cx^{\theta} \atop (m, q_1 q_2) = 1} 
e\( m\(\frac{c_1}{q_1} - \frac{c_2}{q_2}\) \)
\sum_{n \leq x \atop n \equiv 1 (\mod m) } 
e\( n\(\frac{d_1}{q_1} - \frac{d_2}{q_2}\) \) \notag \\
&~=~& 
S_1 ~+~ S_2,
\end{eqnarray}
where 
\begin{eqnarray*}
S_1 ~=~
\sum_{q \leq Q} \sum_{c_1, c_2 \in \F_q^{^*} \atop d \in \F_q^{^*}}  
A(q, c_1, d) \overline{A(q, c_2, d)} 
\sum_{m \leq Cx^{\theta} \atop (m, q) = 1} e\( m\(\frac{c_1 - c_2}{q}\) \)
\sum_{n \leq x \atop n \equiv 1 (\mod m) } 1
\end{eqnarray*}
and
\begin{eqnarray*}
S_2 ~=~
\sum_{q_1, q_2 \leq Q} \sum_{c_1, d_1, c_2, d_2  \atop (q_1, d_1) \neq (q_2, d_2)} 
A(q_1, c_1, d_1) \overline{A(q_2, c_2, d_2)} 
\sum_{m \leq Cx^{\theta} \atop (m, q_1 q_2) = 1} 
e\( m\(\frac{c_1}{q_1} - \frac{c_2}{q_2}\) \)
\sum_{n \leq x \atop n \equiv 1 (\mod m) } 
e\( n\(\frac{d_1}{q_1} - \frac{d_2}{q_2}\) \).
\end{eqnarray*}

\subsection*{Estimation of $S_1$}
We have
\begin{eqnarray*}
S_1 &~=~& 
\sum_{q \leq Q} \sum_{c_1, c_2 \in \F_q^{^*} \atop d \in \F_q^{*}}  
A(q, c_1, d) \overline{A(q, c_2, d)} 
\sum_{m \leq Cx^{\theta} \atop (m, q) = 1} e\( m\(\frac{c_1 - c_2}{q}\) \)
\(\frac{x}{m} ~+~ O(1)\) \\
 &~=~& 
x \sum_{q \leq Q} \sum_{c_1, c_2 \in \F_q^{^*} \atop d \in \F_q^{^*}}  
A(q, c_1, d) \overline{A(q, c_2, d)} 
\sum_{m \leq Cx^{\theta} \atop (m, q) = 1} 
\frac{1}{m} e\( m\(\frac{c_1 - c_2}{q} \) \)
~+~
O\(Q x^{\theta} \sum_{q \leq Q} \sum_{c, d \in \F_q^{^*}}  
|A(q, c, d)|^2\) \\
&~=~&
x \sum_{q \leq Q} \sum_{c_1 \neq c_2 \in \F_q^{^*} \atop d \in \F_q^{^*}}  
A(q, c_1, d) \overline{A(q, c_2, d)} 
\sum_{m \leq Cx^{\theta} \atop (m, q) = 1} 
\frac{1}{m} e\( m\(\frac{c_1 - c_2}{q} \) \)\\
&~&
~+~
O\( \(x \log x ~+~ Q x^{\theta} \)\sum_{q \leq Q} \sum_{c, d \in \F_q^{^*}}  
|A(q, c, d)|^2\). 
\end{eqnarray*}
For $c_1 \neq c_2 \in \F_q^{^*}$, consider the sum 
\begin{eqnarray*}
\sum_{m \leq Cx^{\theta} \atop (m, q) = 1} 
\frac{1}{m} e\( m\(\frac{c_1 - c_2}{q}\) \)
~=~
\sum_{m \leq Cx^{\theta}} \frac{1}{m} e\( m\(\frac{c_1 - c_2}{q}\) \) 
~+~ O\(\frac{\log x}{q}\).
\end{eqnarray*}
Note that
$$
\sum_{m \leq y} e\( m\(\frac{c_1 - c_2}{q}\) \) 
~\ll~
\min\left\{ y,~ \frac{1}{\norm{\frac{c_1 - c_2}{q}}}\right\},
$$
where $\norm{\cdot}$ is defined by 
$\norm{\alpha} = \inf_{u \in \Z} |\alpha - u|$ for any $\alpha \in \R$.
Set 
$$
\delta((c_1 - c_2)/q) ~=~ \frac{1}{\norm{\frac{c_1 - c_2}{q}}}.
$$
By partial summation,
\begin{eqnarray*}
\sum_{m \leq Cx^{\theta}} \frac{1}{m} e\( m\(\frac{c_1 - c_2}{q}\) \)
&~\ll~&
1 ~+~ \int_{1}^{Cx^{\theta}} \min\left\{ t,~ \delta((c_1-c_2)/q)\right\} \frac{dt}{t^2} \\
&~\ll~&
1 ~+~ \int_{1}^{\delta((c_1 - c_2)/q)} \frac{dt}{t} 
~+~ \int_{\delta((c_1-c_2)/q)}^{Cx^{\theta}} \delta((c_1-c_2)/q) \frac{dt}{t^2} \\
&~\ll~&
\log \(\delta((c_1 - c_2)/q) \).
\end{eqnarray*}
Hence, we  get
\begin{eqnarray*}
\sum_{m \leq Cx^{\theta} \atop (m, q) = 1} \frac{1}{m} e\( m\(\frac{c_1 - c_2}{q}\) \)
~\ll~
\log\(\frac{1}{\norm{\frac{c_1 - c_2}{q}}}\) ~+~ \frac{\log x}{q}.
\end{eqnarray*}
Thus, we have
\begin{eqnarray*}
&~&x \sum_{q \leq Q} \sum_{c_1 \neq c_2 \in \F_q^{^*} \atop d \in \F_q^{^*}}  
A(q, c_1, d) \overline{A(q, c_2, d)} 
\sum_{m \leq Cx^{\theta} \atop (m, q) = 1} \frac{1}{m} e\( m\(\frac{c_1 - c_2}{q} \) \) \\
&~\ll~&
x \sum_{q \leq Q} \sum_{c_1 \neq c_2 \in \F_q^{^*} \atop d \in \F_q^{^*}}  
|A(q, c_1, d)| |A(q, c_2, d)|
\(\log\(\frac{1}{\norm{\frac{c_1 - c_2}{q}}}\) ~+~ \frac{\log x}{q}\) \\
&~\ll~&
x \sum_{q \leq Q} \sum_{c_1 \neq c_2 \in \F_q^{^*} \atop d \in \F_q^{^*}}  
|A(q, c_1, d)|^2 \log\(\frac{1}{\norm{\frac{c_1 - c_2}{q}}}\) 
~+~ x \log x \sum_{q \leq Q} 
\sum_{c_1 \neq c_2 \in \F_q^{^*} \atop d \in \F_q^{^*}} \frac{|A(q, c_1, d)|^2}{q} \\
&~\ll~&
x \sum_{q \leq Q} \sum_{c_1 \in \F_q^{^*} \atop d \in \F_q^{^*}}  
|A(q, c_1, d)|^2 \sum_{c_2 \in \F_q^{{^*}} \atop c_2 \neq c_1} 
\log\(\frac{1}{\norm{\frac{c_1 - c_2}{q}}}\) 
~+~
x \log x \sum_{q \leq Q} \sum_{c_1, d \in \F_q^{^*}}  
|A(q, c_1, d)|^2.
\end{eqnarray*}
Note that
$$
\sum_{c_2 \in \F_q^{{^*}} \atop c_2 \neq c_1} 
\log\(\frac{1}{\norm{\frac{c_1 - c_2}{q}}}\) 
~\ll~
\sum_{n \leq q} \log \(\frac{1}{n/q}\) ~\ll~ q.
$$
Thus, we get
$$
x \sum_{q \leq Q} \sum_{c_1 \neq c_2 \in \F_q^{^*} \atop d \in \F_q^{^*}}  
A(q, c_1, d) \overline{A(q, c_2, d)} 
\sum_{m \leq Cx^{\theta} \atop (m, q) = 1} \frac{1}{m} e\( m\(\frac{c_1 - c_2}{q} \) \) 
~\ll~ Qx \sum_{q \leq Q} \sum_{c, d \in \F_q^{^*}}  
|A(q, c, d)|^2.
$$
Hence, we get
$$
S_1 ~\ll~ Qx \sum_{q \leq Q} \sum_{c, d \in \F_q^{^*}}  |A(q, c, d)|^2.
$$

\subsection*{Estimation of $S_2$}
Suppose that $(q_1, d_1) \neq (q_2, d_2)$. If $(m, q_1q_2) = 1$, then we have
\begin{eqnarray*}
\sum_{n \leq x \atop n \equiv 1 (\mod m) } e\( n\(\frac{d_1}{q_1} - \frac{d_2}{q_2}\) \)
&~=~&
\sum_{0 \leq t \leq \frac{x-1}{m}} e\( (1+mt)\(\frac{d_1}{q_1} - \frac{d_2}{q_2}\) \) \\
&~=~&
e\( \frac{d_1}{q_1} - \frac{d_2}{q_2} \) 
\sum_{0 \leq t \leq \frac{x-1}{m}} e\( m \(\frac{d_1}{q_1} - \frac{d_2}{q_2}\) t \).
\end{eqnarray*}
Hence, we get
\begin{eqnarray*}
\left|
\sum_{n \leq x \atop n \equiv 1 (\mod m) } e\( n\(\frac{d_1}{q_1} - \frac{d_2}{q_2}\) \)
\right| 
~\ll~
\frac{1}{\left| e\( m \(\frac{d_1}{q_1} - \frac{d_2}{q_2}\) \) - 1 \right|} 
~\ll~
\frac{1}{\left| \sin\(\pi  m \(\frac{d_1}{q_1} - \frac{d_2}{q_2}\)\) \right|} 
~\ll~
\frac{1}{\norm{m \(\frac{d_1}{q_1} - \frac{d_2}{q_2}\)}}.
\end{eqnarray*}
Hence, we get
\begin{eqnarray*}
S_2 &~\ll~&
\sum_{q_1, q_2 \leq Q \atop c_1, d_1 \in \F_{q_1}^{^*}} 
\sum_{c_2, d_2 \in \F_{q_2}^{^*} \atop (q_1, d_1) \neq (q_2, d_2)} 
|A(q_1, c_1, d_1) A(q_2, c_2, d_2)|
\sum_{m \leq Cx^{\theta} \atop (m, q_1 q_2) = 1} 
\frac{1}{\norm{m \(\frac{d_1}{q_1} - \frac{d_2}{q_2}\)}} \\
&~\ll~&
\sum_{m \leq Cx^{\theta} } 
\sum_{q_1, q_2 \leq Q \atop (m, q_1 q_2) = 1} 
\sum_{c_1, d_1 \in \F_{q_1}^{^*} }
\sum_{c_2, d_2 \in \F_{q_2}^{^*} \atop (q_1, d_1) \neq (q_2, d_2)} 
\frac{|A(q_1, c_1, d_1)|^2 ~+~ |A(q_2, c_2, d_2)|^2}{\norm{m \(\frac{d_1}{q_1} - \frac{d_2}{q_2}\)}} \\
&~\ll~&
\sum_{m \leq Cx^{\theta}} 
\sum_{q_1 \leq Q \atop (m, q_1) = 1} 
\sum_{c_1, d_1 \in \F_{q_1}^{^*} } |A(q_1, c_1, d_1)|^2
\sum_{q_2 \leq Q \atop (m, q_2) = 1} 
\sum_{c_2, d_2 \in \F_{q_2}^{^*} \atop (q_2, d_2) \neq (q_1, d_1)} 
\frac{1}{\norm{m \(\frac{d_1}{q_1} - \frac{d_2}{q_2}\)}}.
\end{eqnarray*}
The inner sum is
\begin{eqnarray*}
\sum_{q_2 \leq Q \atop (m, q_2) = 1} 
\sum_{c_2, d_2 \in \F_{q_2}^{^*} \atop (q_2, d_2) \neq (q_1, d_1)} 
\frac{1}{\norm{m \(\frac{d_1}{q_1} - \frac{d_2}{q_2}\)}} 
~\ll~
Q \sum_{q_2 \leq Q \atop (m, q_2) = 1} 
\sum_{d_2 \in \F_{q_2}^{^*} \atop (q_2, d_2) \neq (q_1, d_1)} 
\frac{1}{\norm{\frac{md_1}{q_1} - \frac{md_2}{q_2}}} 
~\ll~
Q \sum_{t \leq Q^2} \frac{1}{t/Q^2} 
~\ll~ Q^3 \log Q.
\end{eqnarray*}
Thus, we get
$$
S_2 ~\ll~ Q^3 x^{\theta} \log Q \sum_{q_1 \leq Q} 
\sum_{c_1, d_1 \in \F_{q_1}^{^*}}  |A(q_1, c_1, d_1)|^2.
$$
Hence, we get
\begin{eqnarray*}
\sum_{n \leq x} \left|
\sum_{q \leq Q \atop (q, f(n))=1} \sum_{c, d \in \F_q^{{^*}}} A(q,c,d) 
e\( \frac{nd+f(n)c}{q} \)
\right|^2 
&~\ll~& 
\(Q x ~+~ Q^3 x^{\theta} \log Q\) \sum_{q \leq Q} \sum_{c, d \in \F_q^{^*}}  
|A(q, c, d)|^2.
\end{eqnarray*}
Thus, by duality principle (see \thmref{thmDualVar}), we deduce that
\begin{equation*}
\sum_{q \leq Q} \sum_{c, d \in \F_q^{{^*}}} \left|
\sum_{n \leq x \atop (q, f(n))=1} a_n e\( \frac{nd+f(n)c}{q} \)
\right|^2
~\ll~(Qx ~+~ Q^3 x^\theta \log Q) \sum_{n \leq x} |a_n|^2. \qed    
\end{equation*}

\medspace

We now exploit the fact that $\varphi_q(\sigma_p)$ is supported on 
primes $p \equiv 1~(\mod q)$ to give further estimates for non-abelian large sieve inequality.
From \eqref{eqphisigmap}, we have $\varphi_q(\sigma_p) \neq 0$ 
implies that $p \equiv 1 (\mod q)$. Further, if $p \equiv 1 (\mod q)$, then we have
\begin{eqnarray*}
\varphi_q(\sigma_p) ~=~ \sum_{c \in \F_{q}^{*}} e\(\frac{i_p c}{q}\).
\end{eqnarray*}
In this context, we have  the following lemma.
\begin{lem}\label{EqPhiq=1(q)}
We have
$$
\sum_{q \leq Q} \left| \sum_{p \leq x} \varphi_q(\sigma_p) \right|^2
~\leq~ Q \sum_{q \leq Q} \sum_{c \in \F_{q}^{^*}} \left|
\sum_{p \leq x \atop p \equiv 1 (\mod q)} e\(\frac{i_p c}{q}\)
\right|^2.
$$
\end{lem}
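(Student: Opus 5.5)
The plan is to work prime by prime in $q$ and then sum over $q \leq Q$. The only inputs are the support property of $\varphi_q(\sigma_p)$ and the Cauchy--Schwarz inequality over $c \in \F_q^{*}$.

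\emph{Step 1: restrict to $p \equiv 1 \pmod q$.} Fix an odd prime $q \leq Q$. By the description of the Artin symbol in subsection \ref{secPhiqexp}, $\sigma_p$ lies in some $C_a$ with $a \neq 1$ whenever $p \not\equiv 1 \pmod q$, and $\varphi_q(C_a)=0$ there. For $p \equiv 1 \pmod q$ we set $d$-variable aside: the $d$-sum in \eqref{eqphisigmap} equals $q$, so
$$
\varphi_q(\sigma_p) = \sum_{c \in \F_q^{*}} e\left(\frac{-i_p c}{q}\right) = \sum_{c \in \F_q^{*}} e\left(\frac{i_p c}{q}\right),
$$
where the last equality uses $c \mapsto -c$. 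The primes $p = q$, and $p=2$ if relevant, contribute only $O(1)$ terms. I would exclude them by convention, as the paper implicitly does, or note that their value is $0$ in the same way.

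\emph{Step 2: interchange sums.} With $\varphi_q(\sigma_p)$ written as above, we get
$$
\sum_{p \leq x} \varphi_q(\sigma_p) = \sum_{c \in \F_q^{*}} \sum_{\substack{p \leq x \\ p \equiv 1 (\mod q)}} e\left(\frac{i_p c}{q}\right).
$$

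\emph{Step 3: apply Cauchy--Schwarz.} Since the $c$-sum has $q-1$ terms,
$$
\left|\sum_{p\le x}\varphi_q(\sigma_p)\right|^2 \leq (q-1)\sum_{c \in \F_q^{*}} \left|\sum_{\substack{p \leq x \\ p \equiv 1 (\mod q)}} e\left(\frac{i_p c}{q}\right)\right|^2 .
$$
Bounding $q-1 \leq Q$ and summing over $q \leq Q$ then gives the lemma.

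There is no real obstacle here. The only point needing care is the bookkeeping in Step 1: confirming that the $d$-sum in \eqref{eqphisigmap} equals $q$ when $p\equiv1 \pmod q$, and that the value vanishes otherwise.
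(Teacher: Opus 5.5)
Your proof is correct and is essentially the paper's argument: $\varphi_q(\sigma_p)$ vanishes unless $p\equiv 1~(\mod q)$, in which case it equals $\sum_{c\in\F_q^{*}} e(i_pc/q)$; you then interchange the sums and apply Cauchy--Schwarz over the $q-1\le Q$ values of $c$, and you even supply the $c\mapsto -c$ step that the paper leaves implicit. One small fix: the lemma is an exact inequality, so the primes $p=2$ and $p=q$ should be handled by noting that the formula \eqref{eqphisigmap} gives them the value $0$ (or by excluding them), not by the ``$O(1)$'' remark.
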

\begin{proof}
We have
\begin{eqnarray*}
\sum_{q \leq Q} \left| \sum_{p \leq x} \varphi_q(\sigma_p) \right|^2
&~=~&
\sum_{q \leq Q} \left| \sum_{p \leq x \atop p \equiv 1 (\mod q)} 
\sum_{c \in \F_{q}^{^*}} e\(\frac{i_p c}{q}\) \right|^2 
~=~
\sum_{q \leq Q} \left| \sum_{c \in \F_{q}^{^*}} 
\sum_{p \leq x \atop p \equiv 1 (\mod q)}  e\(\frac{i_p c}{q}\) \right|^2 \\
&~\leq~&
\sum_{q \leq Q}  \varphi(q) \sum_{c \in \F_{q}^{^*}} 
\left| \sum_{p \leq x \atop p \equiv 1 (\mod q)}  e\(\frac{i_p c}{q}\) \right|^2,
\end{eqnarray*}
completing the proof of \lemref{EqPhiq=1(q)}.
\end{proof}

\medspace

\subsection{Proof of \thmref{thm-NLS-opt}}
By Cauchy-Schwarz inequality, we get
\begin{eqnarray*}
\sum_{q \leq Q} \sum_{c \in \F_{q}^{^*}} \left|
\sum_{p \leq x \atop p \equiv 1 (\mod q)} a_p~e\(\frac{i_p c}{q}\)
\right|^2
~\ll~
\sum_{q \leq Q} \sum_{c \in \F_{q}^{^*}} 
\(\sum_{p \leq x \atop p \equiv 1 (\mod q)} |a_p|^2\)
\(\sum_{p \leq x \atop p \equiv 1 (\mod q)} 1\).
\end{eqnarray*}
By applying Brun-Titchmarsh inequality, we get
\begin{eqnarray*}
\sum_{q \leq Q} \sum_{c \in \F_{q}^{^*}} \left|
\sum_{p \leq x \atop p \equiv 1 (\mod q)} a_p~e\(\frac{i_p c}{q}\)
\right|^2
&~\ll_{\epsilon}~&
\sum_{q \leq Q} \sum_{c \in \F_{q}^{^*}} \frac{\pi(x)}{q} 
\sum_{p \leq x \atop p \equiv 1 (\mod q)} |a_p|^2 
~\ll_{\epsilon}~
\pi(x) \sum_{q \leq Q} \sum_{p \leq x \atop p \equiv 1 (\mod q)} |a_p|^2 \\
&~\ll_{\epsilon}~& 
\pi(x) \sum_{p \leq x} |a_p|^2 \sum_{q \leq Q \atop q \mid p-1} 1 
~\ll_{\epsilon}~
\frac{x}{\log\log x} \sum_{p \leq x} |a_p|^2,
\end{eqnarray*}
since
$$
\sum_{q \leq Q \atop q \mid p-1} 1  ~\leq~ 
\sum_{q \leq x \atop q \mid p-1} 1  ~\ll~ \frac{\log x}{\log\log x}.
$$
Now suppose that the Generalized Riemann Hypothesis (GRH) is true. Set
$$
b_p ~=~
\begin{cases}
1 & \text{ if $2$ is a primitive root }(\mod p), \\
0 & \text{otherwise.}
\end{cases}
$$
Then we have
\begin{eqnarray*}
\sum_{q \leq Q} \sum_{c \in \F_{q}^{^*}} \left|
\sum_{p \leq x \atop p \equiv 1 (\mod q)} b_p~e\(\frac{i_p c}{q}\)
\right|^2
~=~
\sum_{q \leq Q} \varphi(q) 
\(\sum_{ \substack{p \leq x \\ i_p =1 \\ p \equiv 1 (\mod q)}} 1\)^2.
\end{eqnarray*}
If $\log Q= o(\log x)$, then we have
(see \cite[Theorem 1.2]{Mo} and \cite[Theorem 1.1]{Zo})
$$
\sum_{ \substack{p \leq x \\ i_p =1 \\ p \equiv 1 (\mod q)}} 1 
~\gg~ \frac{\pi(x)}{\varphi(q)}.
$$
Hence we deduce that
$$
\sum_{q \leq Q} \sum_{c \in \F_{q}^{^*}} \left|
\sum_{p \leq x \atop p \equiv 1 (\mod q)} b_p~e\(\frac{i_p c}{q}\)
\right|^2
~\gg~ \pi(x)^2 \log\log Q 
~\gg~ \pi(x) \log\log Q \sum_{p \leq x} |b_p|^2.
$$
This completes the proof of \thmref{thm-NLS-opt}. \qed

\section*{Acknowledgments}
The project was initiated when the second author was a Coleman postdoctoral fellow at Queen's University, Kingston, Canada. We acknowledge Queen’s University, Canada, for providing an excellent atmosphere to work. The second author thanks the Max Planck Institute for Mathematics, Bonn, Germany, for its kind hospitality. We thank Pieter Moree for his comments on an earlier version of the manuscript.

\end{document}